\numberwithin{equation}{section}
\newtheorem{thm}{Theorem}[section]
\newtheorem{prop}[thm]{Proposition}
\newtheorem{lem}[thm]{Lemma}
\theoremstyle{remark}
\newtheorem{rem}{Remark}[section]
\newtheorem{defn}{Definition}
\newcommand{\BBB}{\mathbb}
\newcommand{\R}{{\BBB R}}
\newcommand{\Z}{{\BBB Z}}
\newcommand{\T}{{\BBB T}}
\newcommand{\M}{{\mathcal M}}
\newcommand{\N}{{\BBB N}}
\newcommand{\C}{{\BBB C}}
\newcommand{\LR}[1]{{\langle {#1} \rangle }}
\newcommand{\al}{\alpha}
\newcommand{\be}{\beta}
\newcommand{\ga}{\gamma}
\newcommand{\vp}{\varphi}
\newcommand{\e}{\varepsilon}
\newcommand{\ta}{\tau}
\newcommand{\x}{\xi}
\newcommand{\p}{\partial}
\newcommand{\la}{\lambda}
\newcommand{\de}{\delta}
\newcommand{\om}{\omega}
\newcommand{\equivalent}{\Leftrightarrow}
\newcommand{\I}{\infty}
\newcommand{\RE}{\operatorname{Re}}
\newcommand{\IM}{\operatorname{Im}}
\newcommand{\EQ}[1]{\begin{equation} \begin{split} #1
 \end{split} \end{equation}}
\newcommand{\EQS}[1]{\begin{align} #1 \end{align}}
\newcommand{\EQQS}[1]{\begin{align*} #1 \end{align*}}
\newcommand{\EQQ}[1]{\begin{equation*} \begin{split} #1
 \end{split} \end{equation*}}
\newcommand{\F}{\mathcal{F}}
\newcommand{\ha}{\widehat}
\newcommand{\ds}{\partial_{x}}%
\newcommand{\dt}{\partial_{t}}%
\newcommand{\zm}{P_{\neq0}}
\newcommand*\patchAmsMathEnvironmentForLineno[1]{
  \expandafter\let\csname old#1\expandafter\endcsname\csname #1\endcsname
  \expandafter\let\csname oldend#1\expandafter\endcsname\csname end#1\endcsname
  \renewenvironment{#1}
     {\linenomath\csname old#1\endcsname}
     {\csname oldend#1\endcsname\endlinenomath}}
\newcommand*\patchBothAmsMathEnvironmentsForLineno[1]{
  \patchAmsMathEnvironmentForLineno{#1}
  \patchAmsMathEnvironmentForLineno{#1*}}
\title[Schr\"odinger equations]
{Well-posedness and parabolic smoothing effect\\
 for higher order Schr\"odinger type equations with constant coefficients}
\author[T. Tanaka and K. Tsugawa]{Tomoyuki Tanaka and Kotaro Tsugawa}
\address[T. Tanaka]{Graduate School of Mathematics, Nagoya University,
Chikusa-ku, Nagoya, 464-8602, Japan}
\email[T. Tanaka]{d18003s@math.nagoya-u.ac.jp}
\address[K. Tsugawa]{Department of Mathematics, Faculty of Science and Engineering, Chuo University, Bunkyo-ku, Tokyo, 112-8551, Japan}
\email[K. Tsugawa]{tsugawa@math.chuo-u.ac.jp}
\keywords{Schr\"odinger equations, well-posedness, Cauchy problem, energy method, higher order}
\begin{document}
\maketitle
\setcounter{page}{001}

\begin{abstract}
  In this paper, we consider the Cauchy problem of a class of higher order Schr\"odinger type equations with constant coefficients.
By employing the energy inequality, we show the $L^2$ well-posedness, the parabolic smoothing and a breakdown of the persistence of regularity.
We classify this class of equations into three types on the basis of their smoothing property.
\end{abstract}

\section{Introduction}
In this paper, we consider the Cauchy problem of the following:
\EQS{
  &D_t u(t,x)=D_x^{2m} u(t,x) +\sum_{j=1}^{2m}\big(a_j D_x^{2m-j}u(t,x)+b_j D_x^{2m-j}\bar{u}(t,x)\big),\label{2mLSE}\\
  &u(0,x)=\vp(x),\label{initial}
}
where $1\le m\in\N$, $\M = \R \, (\text{or} \, \T)$, $(t,x)\in (-\infty,\infty) \times\M$, $D_t=-i\dt$, $D_x=-i\ds$ and $i$ is the imaginary unit.
The constants $\{a_j\}, \{b_j\} \subset \C$ and the initial data $\vp(x): \M \to \C$
are given and $u(t,x): (-\infty,\infty) \times \M \to \C$ is unknown.
We are interested in the Cauchy problem of the following higher
order nonlinear Schr\"odinger type equations:
\EQ{\label{NLS}
  i\p_t u(t,x)-\p_x^{2m} u(t,x)= F(\p_x^{2m-1}u, \p_x^{2m-1}\overline{u},\p_x^{2m-2}u, \p_x^{2m-2}\overline{u}, \ldots u, \overline{u}),
}
with \eqref{initial}, where $F$ is a polynomial.
As important examples, this class of equations includes the nonlinear Schr\"odinger hierarchy and the derivative nonlinear Schr\"odinger hierarchy, which are integrable systems appearing in the soliton theory.
It is known that the Cauchy problem of \eqref{NLS} with \eqref{initial} is locally well-posed on $\R$ in weighted Sobolev spaces (of which functions are also sufficiently smooth).
Its proof is based on the Kato type smoothing estimate and the gauge transform \cite{Hayashi93, HO92}.
See Section 3 in \cite{KPV94} for this argument.
On the one hand, the well-posedness for \eqref{NLS} with \eqref{initial} on $\R$ without any weight or on $\T$ remains open.
We also refer to \cite{Grunrock10, KP16, KPV94, Schwarz84} for well-posedness results to higher order dispersive equations including the KdV hierarchy.
In \cite{Chihara02}, Chihara studied the well-posedness and the ill-posedness
of \eqref{NLS} for $m=1$ with \eqref{initial} on $\T$.
Recently, in \cite{TsugawaP}, the second author has studied a similar problem and
shown a non-existence result of solutions of \eqref{NLS} for some nonlinearity and $m=1$ with \eqref{initial}
on $\T$ by employing a smoothing for elliptic equations.
In other words, even when we restrict \eqref{NLS} to $m=1$, the well-posedness for \eqref{NLS} with \eqref{initial} on $\T$ is remarkably different from that on $\R$.
Therefore, the nonlinearity $F$ must have special structures (expected to include the case where \eqref{NLS} is a integrable system)
when the Cauchy problem of \eqref{NLS} with \eqref{initial} is (locally) well-posed on $\T$.
In proofs of \cite{Chihara02, TsugawaP}, the so called ``energy inequality'' of \eqref{2mLSE} with variable coefficients
$\{a_j(t,x)\}$ and $\{b_j(t,x)\}$ plays an important role.
Our plan is to extend this result to $m\ge 2$.
However, the energy inequality for higher $m$ is much complicated.
Therefore, we assume $\{a_j\}$ and $\{b_j\}$ are constants to make the problem simple
in the present paper and will study the variable coefficients case in the forthcoming paper.
$\la$ defined below is used to classify \eqref{2mLSE} into three types.
\begin{defn}\label{definition_lambda}
We write $\sum_{k=1}^0 c_k=0$ for any sequence $\{c_k\}$.
$\ga=\{\ga_j\}_{j=1}^{m-1}$ and $\la=\{\la_j\}_{j=1}^{2m-1}$ are defined as
\EQQS{
  &\ga_{j}=b_{2j}-\sum_{k=1}^{j-1}\bar{a}_{2(j-k)}\ga_k,\quad 1\le j\le m-1,\\
  &\begin{cases}
    \la_{2j}=2\IM a_{2j}
      -2\displaystyle{\sum_{k=1}^{j-1}}\IM\bar{b}_{2(j-k)}\ga_k, \quad 1\le j\le m-1,\\
    \la_{2j-1}=2\IM a_{2j-1}
      +2\displaystyle{\sum_{k=1}^{j-1}}\IM\bar{b}_{2(j-k)-1}\ga_k,\quad 1\le j\le m.
  \end{cases}
}
\end{defn}

Our main result is the following. We write $P^+ f(x):=\F^{-1}(\chi(\xi \ge 1)\F{f})(x)$ and $P^- f(x):=\F^{-1}(\chi(\xi \le -1)\F{f})(x)$, where $\F$ is the Fourier transform and $\chi$ is the definition function.
\begin{thm}\label{main_theorem}\hspace*{0em}\\
(Dispersive type, $L^2$ well-posedness)
Assume that $\la_j=0$ for $1\le j\le 2m-1$.
Then, for any $\vp\in L^2(\M)$, there exists a unique solution $u(t,x)$ of \eqref{2mLSE}--\eqref{initial} such that $u(t,x) \in C((-\infty,\infty); L^2(\M))$.\\
(Parabolic type)
Assume that there exists $j^*\in \N$ such that $\la_j=0$ for $1\le j < 2j^*$
and $\la_{2j^*}> 0$ (resp. $\la_{2j^*}< 0$).
(i) For any $\vp\in L^2(\M)$, there exist a unique solution $u(t,x)$ of \eqref{2mLSE}--\eqref{initial} on $[0,\infty)$ (resp. $(-\infty,0]$)
such that $u(t,x) \in C([0,\infty); L^2(\M))\cap C^\infty((0,\infty)\times\M)$ (resp. $C((-\infty,0]; L^2(\M))\cap C^\infty((-\infty,0)\times\M)$).
(ii) For any $\vp\in L^2(\M)\setminus C^\infty(\M)$ and $\de >0$, no solution $u$ of \eqref{2mLSE}--\eqref{initial} exists on $(-\de,0]$ (resp. $[0,\de)$)
such that $u(t,x) \in C((-\de,0]; L^2(\M))$ (resp. $C([0,\de); L^2(\M))$).\\
(Elliptic type)
Assume that there exists $j^*\in \N$ such that $\la_j=0$ for $1\le j < 2j^*-1$ and $\la_{2j^*-1} > 0$ (resp. $\la_{2j^*-1} < 0$).
(i) Let $\vp\in L^2(\M)$ satisfy $P^+ \vp \not\in H^{1/2}(\M)$. Then, for any $\delta>0$, there exist no solution $u(t,x)$ of \eqref{2mLSE}--\eqref{initial} on $[-\delta,0]$ (resp. $[0,\delta]$) satisfying $u \in C([-\delta,0];L^2(\M))$ (resp. $u \in C([0,\delta];L^2(\M))$).
Moreover, the same result as above holds even if we replace $P^+$, $[-\delta,0]$ and $[0,\delta]$ with $P^-$, $[0,\delta]$
and $[-\delta,0]$, respectively.
(ii) Let $\vp\in L^2(\M)\setminus C^\I(\M)$.
Then, for any $\delta>0$, there exist no solution $u(t,x)$ of \eqref{2mLSE}--\eqref{initial} on $[-\delta,\de]$ satisfying $u \in C([-\delta,\de];L^2(\M))$
\end{thm}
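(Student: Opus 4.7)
My plan is to reduce \eqref{2mLSE} via a gauge change of variables to an equation whose Fourier symbol has a real (non-dispersive) part controlled precisely by $\{\la_j\}$, and then to carry out an $L^2$ energy estimate on the transformed variable. Concretely, I would seek $v = u + \sum_{k=1}^{m-1}\ga_k T_k \bar u$, where $T_k$ is a suitably regularized Fourier multiplier of order $-2k$ (modifying low frequencies so as to avoid division by zero on $\T$), the coefficients $\ga_k$ being determined recursively so as to cancel the even-order couplings of $v$ with $\bar v$ one by one. Matching coefficients at orders $2m-2,2m-4,\ldots$ produces exactly the recursion in Definition \ref{definition_lambda}. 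The transformed equation then reads, schematically,
\begin{equation*}
  D_t v = D_x^{2m}v + \sum_{j=1}^{2m-1}c_j D_x^{2m-j}v + \sum_{j \text{ odd}} \tilde b_j D_x^{2m-j}\bar v + Rv,
\end{equation*}
with $R$ bounded on $L^2(\M)$, and with $2\IM c_j = \la_j$.

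In the dispersive case ($\la_j\equiv 0$), all $c_j$ are real, and the principal part of the $2\times 2$ symbol matrix associated with the pair $(\ha v(\xi),\ov{\ha v(-\xi)})$ is skew-Hermitian. A standard $L^2$ energy estimate then yields $\|v(t)\|_{L^2}\le e^{C|t|}\|\vp\|_{L^2}$ for all $t\in\R$, from which existence and uniqueness in $C(\R;L^2(\M))$ follow by Galerkin approximation combined with Hille--Yosida.

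For the parabolic case with $\la_{2j^*}>0$, the leading non-dispersive term has Fourier symbol $-(\la_{2j^*}/2)\xi^{2(m-j^*)}\le 0$, so the transformed operator generates an analytic semigroup for $t\ge 0$; this immediately gives part (i). For (ii), any continuous $L^2$ solution on $(-\de,0]$ would be the forward evolution from $u(-\de)\in L^2(\M)$, and the $C^\infty$-smoothing of the analytic semigroup would force $\vp=u(0)\in C^\I(\M)$, contradicting the hypothesis; the case $\la_{2j^*}<0$ follows by time reversal. In the elliptic case with $\la_{2j^*-1}>0$, the leading non-dispersive symbol is $-(\la_{2j^*-1}/2)\xi^{2m-2j^*+1}$, an odd function of $\xi$: forward in time, positive frequencies are smoothed while negative frequencies are amplified. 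A continuous $L^2$ solution on $[-\de,0]$ is a forward evolution from $u(-\de)$ to $\vp$, whose smoothing on the positive half-line forces $P^+\vp\in C^\I(\M)\subset H^{1/2}(\M)$, contradicting the hypothesis; the remaining cases in (i) are obtained by frequency/time reflection. For (ii), a solution on $[-\de,\de]$ runs both forward and backward from $t=0$, so that both $P^+\vp$ and $P^-\vp$ must be $C^\I$, forcing $\vp\in C^\I(\M)$.

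The main obstacle is the gauge reduction itself: proving by induction on $k$ that the even-order $\bar v$-couplings can be stripped off successively by precisely the $\ga_k$ of Definition \ref{definition_lambda}, and that the imaginary parts of the residual coefficients $c_j$ are exactly $\la_j/2$. Each new gauge correction produces commutators both with the principal operator $D_x^{2m}$ and with previously introduced corrections, so the bookkeeping is intricate; the cleanest implementation appears to be a symbolic block-diagonalization of the $2\times 2$ symbol matrix associated to $(u,\bar u)$, performed order by order in $|\xi|^{-1}$.
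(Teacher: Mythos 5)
Your normal-form strategy is a recognizable cousin of what the paper actually does, but the paper never changes the unknown: it modifies the \emph{energy}, adding the correction terms $\sum_{j}\RE\ga_j\LR{D_x^{-2j}\zm\bar u,\zm u}$ (and, for the elliptic case, the further terms $\be_k^{\pm}F_k^{\mp}(u)$) to $\|u\|^2$ so that the dangerous couplings cancel inside $\frac{d}{dt}E(u)$. That choice sidesteps two issues your gauge $v=u+\sum_k\ga_kT_k\bar u$ would force you to confront and that you do not address: the map is only $\R$-linear and is a compact perturbation of the identity, so its invertibility on $L^2(\M)$ (needed to transfer well-posedness back to $u$) is not free; and, as you admit, the order-by-order cancellation producing exactly the $\ga_j,\la_j$ of Definition~\ref{definition_lambda} is left as ``intricate bookkeeping,'' which is in fact the entire content of Lemma~\ref{prop_energy_est_trc} and Lemma~\ref{rem_recur}. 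For the dispersive case your sketch is repairable (the odd-order $\bar v$-couplings contribute nothing to $\frac{d}{dt}\|v\|^2$ since $\LR{D_x^{2m-j}\bar v,v}=0$ for $j$ odd), and your Galerkin/Hille--Yosida existence step is a legitimate alternative to the paper's frequency-truncation argument.

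The genuine gaps are in the parabolic and, above all, the elliptic cases. Parabolic: the damping you extract has symbol of order $2(m-j^*)$, which is \emph{strictly lower} than the surviving terms of order up to $2m-1$ (including the odd-order $\bar v$-couplings), and the principal part $i\xi^{2m}$ is purely dispersive; the generator is not sectorial and the high-order residual terms are not relatively bounded perturbations, so ``generates an analytic semigroup, which immediately gives (i)'' does not go through --- one needs the integrated dissipation $\la_{2j^*}\int\||\ds|^{m-j^*}u\|^2dt$ from the energy inequality and the bootstrap of the paper's proof. Elliptic: your claim that ``positive frequencies are smoothed while negative frequencies are amplified'' presupposes that $P^+v$ and $P^-v$ decouple, but your own schematic equation retains couplings $\tilde b_jD_x^{2m-j}\bar v$ of order up to $2m-1$, which exchange positive and negative frequencies at the top order. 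Concretely, the energy identity for $\|P^+u\|^2$ contains terms $\la_j^-\||\ds|^{m-j/2}P^-u\|^2$ that cannot be controlled by $\|u\|^2$ and carry the wrong sign relative to the smoothing you want; this is precisely the obstruction the paper isolates, and overcoming it requires both the structural fact that $\la_{2k}=0$ for $k<j^*$ forces $\la_j^-=0$ for $j\le 2j^*+1$ (Lemma~\ref{lem_la}) and the second layer of correction terms $F_k^-$ built from negative-order energies (Proposition~\ref{prop_energy_est_twist_+}). Without an argument replacing this, the forward $H^{1/2}$-smoothing of $P^+u$ up to the endpoint $t=0$ --- which is exactly what the contradiction in Elliptic (i) needs, and is weaker than the $C^\infty$ endpoint regularity you assert --- is not established.
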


\begin{rem}
Put $v(t)= \LR{\p_x}^{-s} u(t)$. Then $v$ satisfies \eqref{2mLSE} if $u$ is the solution of \eqref{2mLSE} and $u(t) \in L^2(\M)  \equivalent v(t) \in H^s(\M)$.
Therefore, Theorem \ref{main_theorem} holds even if we replace $L^2(\M)$ with $H^s(\M)$ and $H^{1/2}(\M)$
with $H^{s+1/2}(\M)$ for any $s\in \R$.
\end{rem}
\begin{rem}
In ``Dispersive type,'' the persistence of regularity holds on both $(-\infty,0]$ and $[0,\infty)$.
In ``Parabolic type,'' the equations have the parabolic smoothing effect on either $(-\infty,0]$ or $[0,\infty)$, which means the persistence of regularity breaks down on either $[0,\infty)$ or $(-\infty,0]$.
Non-existence results in ``Parabolic type'' and ``Elliptic type'' is by the breakdown
of the persistence of regularity.
\end{rem}

\begin{rem}
  We give some examples of $\{a_j\}$ and $\{b_j\}$.
  \begin{itemize}
    \item When $m=1$, ``Parabolic type'' does not occur.
    In fact, the equation $D_t u=D_x^2 u+a_1 D_x u+a_2u+b_1D_x\bar{u}+b_2\bar{u}$ is ``Dispersive type'' if $\IM a_1=0$ and it is ``Elliptic type'' otherwise.
    \item Let $H(u)$ be a quadratic form defined by
    \EQQS{
      H(u)
      :=\frac{1}{2}\int|\ds^m u|^2
       +\sum_{j=1}^{2m}(c_ju\ds^{2m-j}u
         +d_j u\ds^{2m-j}\bar{u}
         +e_j\bar{u}\ds^{2m-j}\bar{u})dx
    }
    for given $\{c_j\},\{d_j\},\{e_j\}\subset\C$.
    Then, it is easy to check that $H(u)$ is the Hamiltonian of the equation \eqref{2mLSE} if and only if $\IM a_j=b_{2n-1}=\RE d_{2n-1}=\IM d_{2n}=0$ and $c_{2n}=\bar{e}_{2n}$ for $1\le j\le 2m-1$ and $1\le n\le m$.
    In particular, we can write $c_{2j}=(-1)^{m-j}\bar{b}_{2j}/2$,
    $d_{2j-1}=i(-1)^{m-j}a_{2j-1}$ and $d_{2j}=(-1)^{m-j}a_{2j}$ for $1\le j\le m$ (without loss of generality we can assume $c_{2j-1}=e_{2j-1}=0$ since $c_{2j-1}$- and $e_{2j-1}$-terms always vanish by the integration by parts).
    In this case, we see from Definition \ref{definition_lambda} and Remark \ref{rem_la} that $\la_j=0$ for $1\le j\le 2m-1$, which implies that Hamiltonian equations are ``Dispersive type.''
    \item By using the equation \eqref{2mLSE}, we have
    \EQQS{
      \frac{d}{dt}\|u(t)\|^2
      &=2\RE i\LR{D_t u,u}\\
      &=-2\sum_{j=1}^{2m}(\IM a_j)\LR{D_x^{2m-j}u,u}
       -2\sum_{n=1}^m \IM b_{2n}\LR{D_x^{2(m-n)}\bar{u},u}.
    }
    Therefore, when $\IM a_j=b_{2n}=0$ for $1\le j\le 2m-1$ and $1\le n\le m$, the solution of the equation \eqref{2mLSE} conserves the mass, i.e., $\|u(t)\|$.
    We see from the scaling argument that this condition is also necessary.
    In this case, the equation \eqref{2mLSE} is ``Dispersive type.''
    Indeed, it is easy to see $\ga_j=0$ for $1\le j\le m-1$ by Definition \ref{definition_lambda}.
    It then follows that $\la_k=2\IM a_k=0$ for $1\le k\le 2m-1$.
    \item When $m=2$, we have
    \EQQS{
      \la_1=2\IM a_1,\quad
      \la_2=2\IM a_2,\quad
      \la_3=2\IM a_3+2\IM\bar{b}_1b_2.
    }
    So, equations $D_tu=D_x^4u+iD_xu$ and $D_tu=D_x^4u+D_x^3\bar{u}-iD_x^2\bar{u}$ are ``Elliptic type.''
    On the other hand, $D_tu=D_x^4u+iD_xu+D_x^3\bar{u}-iD_x^2\bar{u}$ is ``Dispersive type'' although this equation does not have the Hamiltonian.
  \end{itemize}
\end{rem}

We recall several results for equations related to \eqref{2mLSE}.
There is a large literature on the well-posedness for the Cauchy problem of Schr\"odinger type equations, especially $m=1$.
In \cite{Mizohata81}, Mizohata showed that if the Cauchy problem
\EQQS{
  &\dt u=\sum_{j=1}^n(i\p_j^2 + c_j(x)\p_j) u+f(t,x),\quad (t,x)\in\R\times\R^n,\\
  &u(0,x)=\vp(x)
}
is $L^2$ well-posed, then the condition
\EQQS{
  \sup_{(t,\om,x)\in\R\times S^{n-1}\times \R^n}\Biggr|\IM\int_0^t \sum_{j=1}^n c_j(x+s\om)\om_j ds\Biggr|<\I
}
holds.
In particular, this condition is also sufficient for the $L^2$ well-posedness when $n=1$.
See \cite{Akhunov14, Chihara02, Doi96, Ichinose84, Kajitani98, Mizohata85, Takeuchi80, Tarama93} (and references therein) for related results.
For $m=2$, in \cite{Mizuhara06}, Mizuhara studied $L^2$ well-posedness for the Cauchy problem:
\EQS{
  \label{Miz1}
  &(D_t-D_x^4-c_1(x)D_x^3-c_2(x)D_x^2-c_3(x) D_x-c_4(x))u=f(t,x)\\
  \label{Miz2}
  &u(0,x)=\vp(x),
}
where $(t,x)\in\R\times\M$.
To be precise, he also studied another equation of the KdV type.
When $\M=\T$, he deduced the necessary and sufficient conditions for the $L^2$ well-posedness for \eqref{Miz1}--\eqref{Miz2}.
On the other hand, when $\M=\R$, he showed some conditions for the $L^2$ well-posedness.
Indeed, his sufficient condition for the $L^2$ well-posedness is also necessary under the additional assumption.
In \cite{Tarama11}, Tarama removed Mizuhara's additional assumption, so he obtained the necessary and sufficient conditions for the $L^2$ well-posedness for \eqref{Miz1}--\eqref{Miz2} on $\R$.

Since the coefficients are constants, by the Fourier transform, \eqref{2mLSE} can be rewritten into the following:
\EQ{\label{2mLSEFourier}
D_t \ha{u}(t,\x)=\x^{2m} \ha{u}(t,\x) +\sum_{j=1}^{2m}\big(a_j \x^{2m-j}\ha{u}(t,\x)+b_j \x^{2m-j}\overline{\ha{u}}(t,-\x)\big).
}
Here, we fix $\x\in \R$ (or $\Z$) and put
\EQQ{
  U_\x(t)=\left(
    \begin{array}{cc}
      \ha{u}(t,\x)\\
      \overline{\ha{u}}(t,-\x)
    \end{array}
  \right),
\ \ \
  X_0 = \left(
    \begin{array}{cc}
      1 & 0 \\
      0 & -1
    \end{array}
  \right),
\ \ \
  X_j = \left(
    \begin{array}{cc}
      a_j & b_j \\
      (-1)^{j+1}\overline{b_j} & (-1)^{j+1}\overline{a_j}
    \end{array}
  \right),
}
for $1\le j \le 2m$.
Then, by \eqref{2mLSEFourier} with \eqref{initial}, it follows that
\EQ{\label{ODE}
D_t U_\x(t) =\sum_{j=0}^{2m} \x^{2m-j} X_j U_\x(t), \ \ \ U_\x(0)={}^t(\ha{\vp}(\x),\overline{\ha{\vp}}(-\x)),
}
which is a system of linear ordinary differential equations. We can easily obtain the unique solution
\EQ{\label{explicit_formula}
U_\x(t)=U_\x(0) \exp it \sum_{j=0}^{2m} \x^{2m-j} X_j
}
on $t\in (-\infty,\infty)$ for each $\x \in \R$ (or $\Z$).
Therefore, our interest in Theorem \ref{main_theorem} is essentially on the regularity of the solution.
Here, note that $X_jX_k=X_kX_j$ holds for any $0 \le j,k \le 2m$ if and only if
$b_j=0$ holds for any $1 \le j\le 2m$.
If we assume this assumption, \eqref{ODE} is not a system but a single ordinary differential equation and
\EQ{
\ha{u}(t,\x)=\ha{\vp}(\x) \exp it\Big(\x^{2m}+\sum_{j=1}^{2m}\x^{2m-j}a_j\Big)
}
for each $\xi\in \R$ (or $\Z$).
Since $\ga_j=0$ and $\la_j=2\mathrm{Im} \, a_j$, it follows that
\EQQ{
|\ha{u}(t,\x)|=|\ha{\vp}(\x)| \prod_{j=1}^{2m} \exp \frac{-t\x^{2m-j}\la_j}{2},
}
by which we obtain Theorem \ref{main_theorem} easily.
On the other hand, it seems difficult to obtain Theorem \ref{main_theorem} by \eqref{explicit_formula} for general $\{b_j\}$
since $X_jX_k \neq X_kX_j$ for some $j,k$.
To avoid this difficulty, we employ the energy estimate.
In particular, we modify the energy, adding correction terms so as to cancel out derivative losses. See e.g. \cite{HITW15, Kwon08}  for this argument.
However, some of derivative losses cannot be eliminated by correction terms, and they may essentially affect the well-posedness of the Cauchy problem of \eqref{2mLSE}--\eqref{initial} as stated in Theorem \ref{main_theorem}.
Propositions \ref{prop_energy_est_0} and \ref{prop_energy_est_twist_+} are main estimates in this paper.
The first term of the left-hand side of \eqref{energy_ineq_0} is the main part of the energy.
The second term is the correction term.
For ``Dispersive type,'' the third and the fourth terms vanish.
Thus, we easily obtain the $L^2$ a priori estimate.
For ``Parabolic type,'' the third term includes $\la_{2j^*}\| |\p_x|^{m-j^*}u\|^2$.
The parabolic smoothing is caused by the term.
For ``Elliptic type,'' the fourth term includes $\la_{2j^*-1}\LR{D_x^{2(m-j^*)+1}u,u}$.
We want to show the parabolic smoothing by making use of the term.
However, the sign of the term is not definite. That is unfavorable in our argument.
Therefore, we compute the energy inequalities of $P^+u$ and $P^-u$ instead of $u$ and obtain
Lemma \ref{prop_energy_est_trc}.
Note that the sign of all terms except the correction terms in \eqref{energy_ineq_+} and \eqref{energy_ineq_-}
are definite.
Though \eqref{energy_ineq_+} is the energy inequality for $\|P^+u\|$, it includes $\la_j^-\| |\p_x|^{m-j/2}P^{-}u\|^2$.
This is because \eqref{2mLSE} is essentially coupled system of $P^+u$ and $P^-u$ as \eqref{2mLSEFourier}.
The term $\la_j^-\| |\p_x|^{m-j/2}P^{-}u\|^2$ cannot be estimated by $\|u\|$.
This is the main difficulty in the proof of ``Elliptic type'' in Theorem \ref{main_theorem}.
The key idea is to eliminate these terms in two steps where $1\le j\le 2j^*+1$ and $2(j^*+1)\le j\le 2m-1$.
First we analyse a property of $\{\la_j^-\}$ so that $\la_{2k}=0$ for $1\le k\le j^*-1$ implies that the first $2j^*+1$ of $\{\la_j^-\}$ vanish (see Lemma \ref{lem_la}).
In order to cancel out the rest of unfavorable terms $\la_j^-\| |\p_x|^{m-j/2}P^{-}u\|^2$ for $2(j^*+1)\le j\le 2m-1$, we use an additional correction term $F_k^-$ and obtain \eqref{eq_twist_+} (see also \eqref{eq_twist_-}).
Here, $F_k^-$ originates from the energy inequality for $\||\ds|^{-(k+2)/2} P^- u\|$, and $F_k^-$ does not yield a bad effect thanks to the first step.

The rest of this paper is organized as follows.
In Section 2, we state main estimates which are energy estimates for $u$ and $P^\pm u$, and give proofs of them.
In Section 3, we show Theorem \ref{main_theorem}.
In particular, we show a smoothing for ``Elliptic type" (Proposition \ref{prop_smoothing}) from the energy estimate for $P^\pm u$.

Here, we set some notation.
Let $\LR{\cdot,\cdot}:=\LR{\cdot,\cdot}_{L^{2}}$ and $\|\cdot\|:=\|\cdot\|_{L^2}$.
We also use the same symbol for $\LR{\cdot}:=(1+|\cdot|^{2})^{1/2}$.
$P_0$ and $P_{\neq 0}$ are defined by $P_0 f(x):=\F^{-1}(\chi(|\xi| < 1)\F{f})(x)$ and $P_{\neq 0} f(x):=\F^{-1}(\chi(|\xi| \ge 1)\F{f})(x)$.
We define the Riesz and Bessel potentials by $|\p_x|^s f:=\F^{-1}(|\x|^s\F{f})(x)$ and $\LR{\p_x}^s f:=\F^{-1}(\LR{\x}^s\F{f})(x)$.


\section{the energy estimates}
Our purpose in this section is to show Propositions \ref{prop_energy_est_0} and \ref{prop_energy_est_twist_+}.
Proposition \ref{prop_energy_est_0} below is used to show ``Dispersive type'' and ``Parabolic type'' in Theorem \ref{main_theorem}.
\begin{prop}\label{prop_energy_est_0}
  Let $u$ satisfy \eqref{2mLSE}.
  Then, there exists $C=C(\{a_j\},\{b_j\})>0$ such that
  \EQ{\label{energy_ineq_0}
  &\Biggr|\frac{d}{dt}\Biggr(\|u\|^2+\sum_{j=1}^{m-1}\RE\ga_{j}\LR{D_x^{-2j}\zm \bar{u},\zm u}\Biggr)\\
  &+\sum_{j=1}^{m-1}\la_{2j}\||\ds|^{m-j} u\|^2
   +\sum_{j=1}^{m}\la_{2j-1}\LR{D_x^{2(m-j)+1} u, u}\Biggr|
  \le C\|u\|^2.
  }
\end{prop}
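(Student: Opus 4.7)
The plan is to differentiate the quantity inside the outer absolute value on the left-hand side of \eqref{energy_ineq_0} using \eqref{2mLSE} and to reorganise the resulting terms so that the $\la_j$-weighted pairings survive exactly and every remaining contribution is bounded by $\|u\|^2$. First I would compute $\frac{d}{dt}\|u\|^2=2\RE\LR{\p_t u,u}=-2\IM\LR{D_t u,u}$. Inserting \eqref{2mLSE}, the leading pairing $\LR{D_x^{2m}u,u}$ is real and drops out; each $a_j$-term contributes only through $\IM a_j$ because $D_x^{2m-j}$ is self-adjoint, while each $b_j$-term has the Fourier expression $\int\xi^{2m-j}\overline{\ha u(\xi)\ha u(-\xi)}\,d\xi$ with $\ha u(\xi)\ha u(-\xi)$ even in $\xi$, so parity forces the odd-$j$ contributions to vanish. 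This leaves
\EQQS{
\frac{d}{dt}\|u\|^2=-2\sum_{j=1}^{2m}\IM a_j\,\LR{D_x^{2m-j}u,u}-2\sum_{n=1}^{m}\IM\bigl(b_{2n}\LR{D_x^{2m-2n}\bar u,u}\bigr).
}

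Next I would differentiate each correction $\RE\bigl(\ga_j\LR{D_x^{-2j}\zm\bar u,\zm u}\bigr)$. Since $D_x^{-2j}\zm$ is a Fourier multiplier it commutes with $\p_t$, and after substituting $\p_t u$ and $\p_t\bar u$ from \eqref{2mLSE} the two leading $\xi^{2m}$-contributions enter with opposite imaginary signs and cancel (the same mechanism that keeps $\frac{d}{dt}[\ha u(\xi)\overline{\ha u(-\xi)}]$ strictly lower-order in $\xi$). What is left is a linear combination of pairings $\LR{D_x^{2m-2j-\ell}u,u}$ arising from odd-indexed $b$'s and even-indexed $\bar a$'s, together with mixed pairings $\LR{D_x^{2m-2j-\ell}\bar u,u}$ arising from even-indexed $b$'s and odd-indexed $\bar a$'s. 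The recursion $\ga_j=b_{2j}-\sum_{k=1}^{j-1}\bar a_{2(j-k)}\ga_k$ of Definition \ref{definition_lambda} is exactly what is required so that, proceeding by induction on $j$, the top-order mixed pairing at step $j$ produced both by the original $b_{2j}$-term and by the time derivative of the previously introduced corrections $\ga_1,\dots,\ga_{j-1}$ cancels. After this cancellation, the surviving even-order pairings $\LR{D_x^{2m-2j}u,u}=\||\ds|^{m-j}u\|^2$ appear with coefficient $\la_{2j}=2\IM a_{2j}-2\sum_k\IM\bar b_{2(j-k)}\ga_k$, and the surviving odd-order pairings $\LR{D_x^{2(m-j)+1}u,u}$ appear with coefficient $\la_{2j-1}$, matching Definition \ref{definition_lambda} precisely.

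All other terms produced by the computation either come from low-order pairings $\LR{D_x^{\le 0}u,u}$ of the original $a_j$-sum, or carry a factor $D_x^{-\ell}\zm$ with $\ell\ge 0$ coming from differentiation of the correction terms; both are uniformly bounded by $C(\{a_j\},\{b_j\})\|u\|^2$, which yields \eqref{energy_ineq_0}. The main obstacle is the combinatorial bookkeeping in the inductive cancellation: at each step $j$ one has to enumerate all unfavourable terms produced by $\frac{d}{dt}$ acting on $\ga_1,\dots,\ga_{j-1}$ and verify that the coefficient of the leading-order mixed pairing is exactly $b_{2j}-\sum_{k<j}\bar a_{2(j-k)}\ga_k$, and similarly that the coefficients of the surviving pairings of fixed signature reproduce the $\la_j$'s. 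The clean split between even- and odd-indexed $\la$'s, and the sign discrepancy in the formulae of Definition \ref{definition_lambda}, is then forced by the parity vanishing observed in the opening calculation.
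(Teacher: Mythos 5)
Your overall architecture --- a direct computation of $\frac{d}{dt}$ of the modified energy, with the recursion for $\ga_j$ driving an inductive cancellation of the top-order mixed pairings --- is viable, and it is essentially the ``direct proof'' that the paper alludes to but does not write out (the paper instead obtains Proposition \ref{prop_energy_est_0} by adding the two frequency-localized identities \eqref{energy_ineq_+} and \eqref{energy_ineq_-} of Lemma \ref{prop_energy_est_trc} and invoking $\ga_k=\al_{2k}$ together with Remark \ref{rem2.1}). However, there is a concrete error at the heart of your cancellation mechanism. You assert that in $\frac{d}{dt}\RE\ga_j\LR{D_x^{-2j}\zm\bar u,\zm u}$ the two leading $\xi^{2m}$-contributions enter with opposite signs and cancel. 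They do not: $\LR{D_x^{-2j}\bar u,u}$ is the \emph{anti-diagonal} quantity $\int\xi^{-2j}\,\overline{\ha u(-\xi)\ha u(\xi)}\,d\xi$, and since $(-\xi)^{2m}=\xi^{2m}$, differentiating either factor produces the same leading coefficient $-i\xi^{2m}$, so the two contributions \emph{add} to give the top-order term $2\IM\bigl(\ga_j\LR{D_x^{2(m-j)}\bar u,u}\bigr)$. The cancellation you describe occurs only for diagonal pairings $\LR{D_x^{\ell}u,u}=\int\xi^{\ell}|\ha u(\xi)|^2\,d\xi$ --- that is the mechanism that removes $\LR{D_x^{2m}u,u}$ from $\frac{d}{dt}\|u\|^2$, not the one governing the correction terms. (Compare the paper's computation $A_1^j+B_1^j=2\IM\al_j\LR{D_x^{2m-j}\overline{v^-},v^+}$ in the proof of Lemma \ref{prop_energy_est_trc}: the two leading pieces reinforce each other.)

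This is not a cosmetic slip, because the surviving leading term $2\IM\bigl(\ga_j\LR{D_x^{2(m-j)}\bar u,u}\bigr)$ is the whole point of the correction: it is exactly what cancels the top-order loss $-2\IM\bigl(b_{2j}\LR{D_x^{2(m-j)}\bar u,u}\bigr)$ from $\frac{d}{dt}\|u\|^2$, once combined with the order-$2(m-j)$ contributions that the earlier corrections $\ga_1,\dots,\ga_{j-1}$ produce through their $\bar a_{2(j-k)}$-interactions; that three-way balance is precisely the recursion $\ga_j=b_{2j}-\sum_{k<j}\bar a_{2(j-k)}\ga_k$. In the accounting you describe --- leading parts of the corrections cancelling, and the $b_{2j}$-loss being absorbed only by the corrections of index $k<j$ --- the term $\ga_j$ never enters the order-$2(m-j)$ budget, which therefore cannot be closed without imposing an unassumed constraint of the form $b_{2j}=c\sum_{k<j}\bar a_{2(j-k)}\ga_k$ on the coefficients; the argument as stated would fail for generic $\{a_j\},\{b_j\}$. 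Once this sign is corrected, the rest of your bookkeeping (parity killing the odd-$j$ losses of $\frac{d}{dt}\|u\|^2$, the surviving diagonal pairings assembling into $\la_{2j}$ and $\la_{2j-1}$, the low-frequency and low-order remainders bounded by $C\|u\|^2$) is sound and would yield \eqref{energy_ineq_0} by a route genuinely different from, and somewhat more economical than, the paper's detour through $P^{\pm}u$ --- though the paper needs that detour anyway for Proposition \ref{prop_energy_est_twist_+}.
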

\begin{defn}\label{def2}
$\al=\{\al_j\}_{j=1}^{2m-1},\la^+=\{\la_j^+\}_{j=1}^{2m-1},\la^-=\{\la_j^-\}_{j=1}^{2m-1}$ are defined as
  \EQQS{
    &\al_{j}=b_{j}-\displaystyle{\frac{1}{2}\sum_{k=1}^{j-1}} (1+(-1)^{j-k}) \bar{a}_{j-k}\al_k,\quad 1\le j\le 2m-1,\\
    &\la_j^+=2\IM a_j+\sum_{k=1}^{j-1}(-1)^{j-k+1}\IM\bar{b}_{j-k}\al_k,\quad 1\le j\le 2m-1,\\
    &\la_j^-=-\sum_{k=1}^{j-1}\IM \bar{b}_{j-k}\al_k,\quad 1\le j\le 2m-1.
  }
  Let $1\le j^*\le m-2$.
  Assume that $\la_{2j^*-1}^+\neq0$.
  $\be^+=\{\be_k^+\}_{k=1}^{2(m-j^*-1)}$ and $\be^-=\{\be_k^-\}_{k=1}^{2(m-j^*-1)}$ are inductively defined as
  \EQQS{
    &\la_{2j^*+k+1}^-
    =\sum_{j=1}^k(-1)^{k-j}\la_{2j^*+k-j-1}^+\be_j^+,
    \quad 1\le k\le 2(m-j^*-1),\\
    &\la_{2j^*+k+1}^-
    =\sum_{j=1}^k (-1)^{k}\la_{2j^*+k-j-1}^+\be_j^-,
    \quad 1\le k\le 2(m-j^*-1).
  }
  Note that $\la_{2j^*+k-j-1}^+=\la_{2j^*-1}^+\neq0$ when $j=k$.
  So, $\be_{k}^+$ and $\be_k^-$ are well-defined.
\end{defn}
\begin{rem}\label{rem2.1}
  It is easy to see that $\ga_j=\al_{2j}$ for $1\le j\le m-1$.
  Thus, we have
  \EQQ{
     \la_{2j}=\la_{2j}^+ + \la_{2j}^-,\quad
     \la_{2k-1}=\la_{2k-1}^+-\la_{2k-1}^-
  }
  for $1\le j\le m-1$ and $1\le k\le m$.
\end{rem}
Proposition \ref{prop_energy_est_twist_+} below is used to show ``Elliptic type'' in Theorem \ref{main_theorem}.
\begin{prop}\label{prop_energy_est_twist_+}
  Let $u$ satisfy \eqref{2mLSE}.
  Assume that there exists $j^* \in \N$ such that $\la_j=0$ for $1 \le j\le 2(j^*-1)$ and $\la_{2j^*-1} \neq 0$.
  Put
  \EQQS{
     &F_k^-(u)=\||\p_x|^{-(k+2)/2} P^- u\|^2
       +\sum_{j=1}^{2m-1}\RE\al_j\LR{D_x^{-j}|\ds|^{-k-2} \overline{P^+ u}, P^- u},\\
     &F_k^+(u)=\||\p_x|^{-(k+2)/2} P^+ u\|^2
       +\sum_{j=1}^{2m-1}\RE\al_j\LR{D_x^{-j}|\ds|^{-k-2} \overline{P^- u}, P^+ u}.
  }
  Then, there exists $C=C(\{a_j\},\{b_j\})>0$ such that
  \EQ{\label{eq_twist_+}
    &\Biggr|\frac{d}{dt}\Biggr(\|P^+ u\|^2
    +\sum_{j=1}^{2m-1}\RE\al_j\LR{D_x^{-j} \overline{P^- u}, P^+ u}+\sum_{k=1}^{2(m-j^*-1)} \be_k^+ F_k^-(u) \Biggr)\\
    &+ \la_{2j^*-1}^+\||\ds|^{m-j^*+1/2}P^+ u\|^2 \Biggr| \le C \|u\|^2+ C\||\ds|^{m-j^*}P^+ u\|^2,
  }
and
\EQ{\label{eq_twist_-}
    &\Biggr|\frac{d}{dt}\Biggr(\|P^- u\|^2
    +\sum_{j=1}^{2m-1}\RE\al_j\LR{D_x^{-j} \overline{P^+ u}, P^- u}+\sum_{k=1}^{2(m-j^*-1)} \be_k^- F_k^+(u) \Biggr)\\
    &- \la_{2j^*-1}^+\||\ds|^{m-j^*+1/2}P^- u\|^2 \Biggr| \le C \|u\|^2+ C\||\ds|^{m-j^*}P^- u\|^2.
  }
\end{prop}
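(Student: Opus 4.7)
The strategy is to compute $\frac{d}{dt}$ of each term on the left-hand side and track the cancellations dictated by Definition~\ref{def2}. I will focus on \eqref{eq_twist_+}; \eqref{eq_twist_-} then follows by swapping $+\leftrightarrow-$.

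\emph{Step 1 (Reduction to a coupled system).} First I would record the Fourier identity $P^+\bar u=\overline{P^-u}$ and $P^-\bar u=\overline{P^+u}$, which follows at once from $\F(\bar u)(\x)=\overline{\F(u)(-\x)}$ and the definitions of $P^\pm$. Since $P^\pm$ commutes with $D_x$ and with constants, applying $P^\pm$ to \eqref{2mLSE} yields the coupled system
\begin{equation*}
D_tP^\pm u=D_x^{2m}P^\pm u+\sum_{j=1}^{2m}\bigl(a_jD_x^{2m-j}P^\pm u+b_jD_x^{2m-j}\overline{P^\mp u}\bigr).
\end{equation*}
All subsequent computations reduce to bilinear expressions in $(P^+u,P^-u)$.

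\emph{Step 2 (Base energy and the $\al_j$-correction).} I would then compute $\frac{d}{dt}\|P^+u\|^2$ in the standard way. The principal part $iD_x^{2m}P^+u$ contributes nothing (self-adjointness), while the $a_j$-terms contribute diagonal pieces $-2\IM a_j\LR{D_x^{2m-j}P^+u,P^+u}$. The $b_j$-terms, however, produce off-diagonal cross-terms $2\RE[ib_j\LR{D_x^{2m-j}\overline{P^-u},P^+u}]$ with derivative losses of order $2m-j$. To absorb these, I would differentiate $\sum_j\RE\al_j\LR{D_x^{-j}\overline{P^-u},P^+u}$: the leading $D_x^{2m}$-contributions from the two equations combine into a cross-term at exactly order $2m-j$, and the subleading feedback from the $a_k$ and $b_k$ creates cross-terms at orders $2m-j-k$. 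The recursion defining $\al_j$ in Definition~\ref{def2} is tuned, term by term in increasing $j$, so that the $b_j$ cross-terms cancel exactly. After this cancellation one is left with diagonal terms $\sum_j\la_j^+\LR{D_x^{2m-j}P^+u,P^+u}$ plus residual off-diagonal terms $\sum_j\la_j^-\LR{D_x^{2m-j}\overline{P^-u},P^+u}$, with the splitting $\la_j=\la_j^++\la_j^-$ (or $\la_j^+-\la_j^-$) arising from grouping real and imaginary parts exactly as in Remark~\ref{rem2.1}.

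\emph{Step 3 (Eliminating the $\la_j^-$-tails via $F_k^-$).} Under the hypothesis $\la_j=0$ for $1\le j\le 2(j^*-1)$, Lemma~\ref{lem_la} gives $\la_j^-=0$ for $1\le j\le 2j^*+1$, which kills the low-order off-diagonal remainders and in particular forces $\la_{2j^*-1}^+=\la_{2j^*-1}\ne0$, so that the $\be_k^\pm$ are well-defined. The diagonal $P^+$-contribution at $j=2j^*-1$ then yields the leading term $\la_{2j^*-1}^+\||\ds|^{m-j^*+1/2}P^+u\|^2$ of \eqref{eq_twist_+}, since $P^+$ is supported on $\x\ge 1$. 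The remaining off-diagonal pieces $\la_j^-\LR{D_x^{2m-j}\overline{P^-u},P^+u}$ for $2(j^*+1)\le j\le 2m-1$ are eliminated by the auxiliary term $\sum_k\be_k^+\frac{d}{dt}F_k^-(u)$: the functional $F_k^-(u)$ is the analogue of the energy from Step~2 for $\||\ds|^{-(k+2)/2}P^-u\|^2$, so an identical computation yields a principal term $\la_{2j^*-1}^+\LR{D_x^{2m-2j^*+1-k-2}P^-u,P^-u}$ together with subleading $\la_j^\pm$-contributions. The recursion defining $\be_k^+$ in Definition~\ref{def2} is precisely the triangular linear system that matches and cancels the residual $\la_j^-$-cross-terms one order at a time.

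\emph{Main obstacle.} The principal difficulty is the combinatorial bookkeeping of Step~3: one must verify that the recursion for $\be_k^+$ (whose solvability uses $\la_{2j^*-1}^+\ne0$) matches the residual $\la_j^-$-coefficients produced by the $P^+$-energy term by term, with no leftover at the order $2(m-j^*)+1$. All byproducts generated along the way---subprincipal contributions from $a_k,b_k$ in Step~2 and the $F_k^-$ contributions for $k\ge1$---must be strictly below the order $m-j^*+1/2$ in derivatives, so that Cauchy--Schwarz together with the frequency localization $\||\ds|^sP^\pm u\|\lec\|u\|$ for $s\le0$ allows one to absorb them into the right-hand side $C\|u\|^2+C\||\ds|^{m-j^*}P^+u\|^2$.
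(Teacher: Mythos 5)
Your plan follows the paper's proof essentially step for step: the coupled system for $P^\pm u$ together with the $\al_j$-corrected energy is the content of Lemma \ref{prop_energy_est_trc}, the vanishing $\la_j^+=\la_j^-=0$ at low orders (and hence $\la_{2j^*-1}^+=\la_{2j^*-1}\neq0$) is Lemma \ref{lem_la} combined with Remark \ref{rem2.1}, and the elimination of the remaining $\la_j^-$-terms for $2j^*+2\le j\le 2m-1$ via the triangular recursion for $\be_k^+$ applied to the functionals $F_k^-$ --- which are precisely the $P^-$-energies of $|\ds|^{-(k+2)/2}\zm u$ --- is exactly how the paper concludes, using \eqref{energy_ineq_-} and the resummation \eqref{eq4.1}. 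One small correction to your bookkeeping: after the $\al_j$-cancellation the surviving $\la_j^-$-contributions are not cross terms $\LR{D_x^{2m-j}\overline{P^-u},P^+u}$ (those are what the $\al_j$ recursion removes) but the diagonal quantities $\la_j^-\||\ds|^{m-j/2}P^-u\|^2$, and it is this diagonal form that lets them be matched against the diagonal $(-1)^j\la_j^+\||\ds|^{m-(j+k+2)/2}P^-u\|^2$ output of $\frac{d}{dt}F_k^-(u)$, consistent with your own Step~3.
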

To prove Propositions \ref{prop_energy_est_0} and \ref{prop_energy_est_twist_+}, we use the following lemma.
\begin{lem}\label{prop_energy_est_trc}
  Let $u$ satisfy \eqref{2mLSE}.
  Then, there exists $C=C(\{a_j\},\{b_j\})>0$ such that
  \EQ{\label{energy_ineq_+}
    &\Biggr|\frac{d}{dt}\Biggr(\|P^+ u\|^2
    +\sum_{j=1}^{2m-1}\RE\al_j\LR{D_x^{-j} \overline{P^- u}, P^+ u}\Biggr)\\
    &+\sum_{j=1}^{2m-1}(
    \la_j^+\||\ds|^{m-j/2}P^+ u\|^2
    +\la_j^-\||\ds|^{m-j/2}P^- u\|^2)\Biggr| \le C\|u\|^2
  }
  and
  \EQ{\label{energy_ineq_-}
    &\Biggr|\frac{d}{dt}\Biggr(\|P^- u\|^2
    +\sum_{j=1}^{2m-1}\RE\al_j\LR{D_x^{-j} \overline{P^+ u}, P^- u}\Biggr)\\
    &+\sum_{j=1}^{2m-1}(-1)^j
    (\la_j^+\||\ds|^{m-j/2}P^- u\|^2
    +\la_j^-\||\ds|^{m-j/2}P^+ u\|^2)\Biggr| \le C\|u\|^2.
  }
\end{lem}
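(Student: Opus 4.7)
My plan is to derive \eqref{energy_ineq_+} and \eqref{energy_ineq_-} by a twisted energy calculation, the twist being supplied by the correction terms with $\al_j$. Setting $v:=P^+u$, $w:=P^-u$, and using $P^\pm\bar u=\overline{P^\mp u}$ (which follows from $\widehat{\bar u}(\x)=\overline{\hat u(-\x)}$), applying $P^+$ to \eqref{2mLSE} yields the closed equation
\EQQS{
  \p_t v=iD_x^{2m}v+i\sum_{k=1}^{2m}a_k D_x^{2m-k}v+i\sum_{k=1}^{2m}b_k D_x^{2m-k}\overline w,
}
with no low-frequency remainder, and the analogous equation for $w$ obtained by swapping $v$ and $w$ in the $b_k$-sum.

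I first compute $\tfrac{d}{dt}\|v\|^2=2\RE\LR{\p_t v,v}$: the top-order $iD_x^{2m}v$ contribution drops by self-adjointness; the $a_k$-terms produce the diagonal $-2\sum_k(\IM a_k)\||\ds|^{m-k/2}v\|^2$ (since $\hat v$ is supported in $\x\ge 1$); and the $b_k$-terms produce the dangerous cross pieces $-2\sum_k\IM\bigl(b_k\LR{D_x^{2m-k}\overline w,v}\bigr)$ which lose up to $2m-1$ derivatives. To cancel these I differentiate the correction. Writing $\p_t\overline w=\overline{\p_t w}$, the identity $\overline{D_x^k f}=(-1)^k D_x^k\bar f$ produces signs $(-1)^k$; then, using self-adjointness of $D_x^r$, a direct computation shows that for each $j$,
\EQQS{
  \RE\bigl(\al_j\tfrac{d}{dt}\LR{D_x^{-j}\overline w,v}\bigr)=&\;2\IM\bigl(\al_j\LR{D_x^{2m-j}\overline w,v}\bigr)+\sum_k(1+(-1)^k)\IM\bigl(\al_j\bar a_k\LR{D_x^{2m-j-k}\overline w,v}\bigr)\\
  &+\sum_k\bigl[(-1)^k\IM(\al_j\bar b_k)\||\ds|^{m-(j+k)/2}v\|^2+\IM(\al_j\bar b_k)\||\ds|^{m-(j+k)/2}w\|^2\bigr].
}

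Summing over $j$ and reindexing the cross-term contributions by $n:=j+k$, the coefficient of $\LR{D_x^{2m-n}\overline w,v}$ inside $\IM$ becomes $2\al_n+\sum_{k=1}^{n-1}(1+(-1)^{n-k})\bar a_{n-k}\al_k$; requiring this to cancel the bad coefficient $2b_n$ from the direct energy at every $1\le n\le 2m-1$ forces exactly the recursion for $\al_n$ in Definition \ref{def2}. Reindexing the surviving diagonal contributions by the same $n$ and combining with $-2\IM a_n$, the $v$-coefficient at index $n$ reads $-2\IM a_n+\sum_{k=1}^{n-1}(-1)^{n-k}\IM(\al_k\bar b_{n-k})=-\la_n^+$ and the $w$-coefficient reads $\sum_{k=1}^{n-1}\IM(\al_k\bar b_{n-k})=-\la_n^-$, exactly matching Definition \ref{def2}. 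Cross- and diagonal-contributions at shifts $n\ge 2m$ involve nonpositive Sobolev orders and are bounded by $C\|u\|^2$ thanks to the frequency cutoff $|\x|\ge 1$ built into $P^\pm$. This yields \eqref{energy_ineq_+}.

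The estimate \eqref{energy_ineq_-} follows by the identical argument with $v\leftrightarrow w$; the only change is that $\hat w$ and $\widehat{\overline v}$ are supported in $\x\le-1$, so that $\int\x^l|\hat w|^2\,d\x=(-1)^l\||\ds|^{l/2}w\|^2$ (and similarly for $\overline v$), producing the overall factor $(-1)^j$ in front of $\la_j^\pm$ that distinguishes \eqref{energy_ineq_-} from \eqref{energy_ineq_+}. The main technical obstacle in executing this plan is the careful bookkeeping of the parity signs $(-1)^k$ arising from $\overline{D_x^k f}=(-1)^k D_x^k\bar f$: it is precisely these that single out the even-parity terms in the recursion for $\al_j$ and that cleanly split the residual diagonal coefficients into $\la_j^+$ and $\la_j^-$.
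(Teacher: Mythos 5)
Your proposal is correct and follows essentially the same route as the paper: the same correction terms $\RE\al_j\LR{D_x^{-j}\overline{P^\mp u},P^\pm u}$, the same computation of their time derivative using $\overline{D_x^k f}=(-1)^k D_x^k\bar f$, and the same reindexing by $n=j+k$ to match the recursions defining $\al_j$, $\la_j^+$, $\la_j^-$. If anything, you are slightly more explicit than the paper about where the factor $(-1)^j$ in \eqref{energy_ineq_-} comes from (the support of the relevant Fourier transforms in $\x\le-1$), which the paper subsumes under ``the exactly same proof works.''
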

\begin{proof}[Proof of Lemma \ref{prop_energy_est_trc}]
  First, we show \eqref{energy_ineq_+}.
  For simplicity, we set $v^+:=P^+ u$ and $v^-:=P^- u$.
  Note that $P^+ \bar{u}=\overline{P^- u}=\overline{v^-}$ and
  $P^- \bar{u}=\overline{P^+ u}=\overline{v^+}$.
  Then, $v^+$ and $v^-$ satisfy
  \EQS{\label{eq_v1}
  D_t v^+
  =D_x^{2m}v^+
   +\sum_{k=1}^{2m}(a_kD_x^{2m-k}v^+ + b_k D_x^{2m-k}\overline{v^-})
  }
  and
  \EQS{\label{eq_v2}
  D_t \overline{v^-}
  =-D_x^{2m}\overline{v^-}
   -\sum_{k=1}^{2m}(-1)^k(\bar{a}_kD_x^{2m-k}\overline{v^-}+\bar{b}_k D_x^{2m-k}v^+).
  }
  By \eqref{eq_v1}, we have
  \EQQ{
    \frac{d}{dt}\|v^+\|^2&=2\RE \LR{\dt v^+,v^+}=-2\IM \LR{D_t v^+,v^+}\\
    &=-2\sum_{j=1}^{2m}(\IM a_j\LR{D_x^{2m-j} v^+, v^+}
    +\IM b_j\LR{D_x^{2m-j} \overline{v^-}, v^+})\\
    &=-2\sum_{j=1}^{2m}(\IM a_j\||\p_x|^{m-j/2}P^+ u\|^2
    +\IM b_j\LR{D_x^{2m-j} \overline{v^-}, v^+}).
  }
Here, we consider the time derivative of correction terms to cancel out the second term.
  Fix $1\le j\le 2m-1$.
  We see from \eqref{eq_v1} and \eqref{eq_v2} that
  \EQQS{
    &\frac{d}{dt}\RE\al_j\LR{D_x^{-j} \overline{P^- u}, v^+}
       =-\IM\al_j\LR{D_x^{-j} D_t \overline{v^-}, v^+}+\IM\al_j\LR{D_x^{-j} \overline{v^-}, D_t v^+}\\
    &=\IM\al_j\LR{D_x^{-j}(D_x^{2m}\overline{v^-}),v^+}+\IM\al_j\LR{D_x^{-j}\overline{v^-}, D_x^{2m}v^+}\\
    &\quad+\sum_{k=1}^{2m}((-1)^k\IM\al_j \bar{a}_k \LR{D_x^{2m-k-j} \overline{v^-}, v^+}
     +(-1)^k\IM\al_j \bar{b}_k \LR{D_x^{2m-k-j} v^+, v^+}\\
    &\quad+\IM  \al_j \bar{a}_k\LR{D_x^{2m-k-j} \overline{v^-}, v^+}
     +\IM\al_j \bar{b}_k \LR{D_x^{2m-k-j} \overline{v^-}, \overline{v^-}})\\
    &=:A_1^j+B_1^j+\sum_{k=1}^{2m}(A_{2,k}^j+A_{3,k}^j+B_{2,k}^j+B_{3,k}^j).
  }
  Observe that
  \EQQS{
    &A_1^j+B_1^j
    =2\IM\al_j\LR{D_x^{2m-j} \overline{v^-}, v^+},\\
    &A_{2,k}^j+B_{2,k}^j
    =(1+(-1)^k)\IM\al_j \bar{a}_k\LR{D_x^{2m-k-j}\overline{v^-}, v^+},\\
    &A_{3,k}^j=(-1)^k\IM\al_j \bar{b}_k \||\p_x|^{m-(k+j)/2} P^+ u\|^2,\\
    &B_{3,k}^j=\IM\al_j \bar{b}_k \||\p_x|^{m-(k+j)/2} P^- u\|^2.
  }
  We collect coefficients of derivative losses with rearranging the summation order.
  Note that for any sequence $\{c_{j,k}\}$, it holds that
  \EQS{\label{eq4.1}
  \begin{aligned}
    \sum_{j=1}^{p}\sum_{k=1}^{p-j}c_{j,k}
    =\sum_{j=1}^{p-1}\sum_{k=0}^{p-1-j}c_{j,k+1}
    =\sum_{j=1}^{p-1}\sum_{k=1}^{j}c_{k,j-k+1}.
  \end{aligned}
  }
  It is easy to see that
  \EQQS{
  \Biggr|\sum_{j=1}^{2m}\sum_{k=2m-j}^{2m} (A_{2,k}^j+A_{3,k}^j+B_{2,k}^j+B_{3,k}^j)\Biggr|
  \lesssim\|u\|^2.
  }
  Then, by \eqref{eq4.1}, we have
  \EQQS{
    \sum_{j=1}^{2m-1}\sum_{k=1}^{2m-1-j}(A_{2,k}^j+B_{2,k}^j)
    &=\sum_{j=1}^{2(m-1)}\sum_{k=1}^{j}(A_{2,j-k+1}^k+B_{2,j-k+1}^k)\\
    &=\sum_{j=1}^{2(m-1)}\sum_{k=1}^{j}
     (1+(-1)^{j-k+1})\IM\al_k \bar{a}_{j-k+1}\LR{D_x^{2m-1-j}\overline{v^-}, v^+}.
  }
  Similarly, we obtain
  \EQQS{
    &\sum_{j=1}^{2m-1}\sum_{k=1}^{2m-1-j}A_{3,k}^j
    =\sum_{j=1}^{2(m-1)}\sum_{k=1}^{j}(-1)^{j-k+1}
      \IM\al_k \bar{b}_{j-k+1} \||\p_x|^{m-(j+1)/2} P^+ u\|^2,\\
    &\sum_{j=1}^{2m-1}\sum_{k=1}^{2m-1-j}B_{3,k}^j
    =\sum_{j=1}^{2(m-1)}\sum_{k=1}^{j}
      \IM\al_k \bar{b}_{j-k+1} \||\p_x|^{m-(j+1)/2} P^- u\|^2.
  }
  This concludes the proof of \eqref{energy_ineq_+}.
  For the proof of \eqref{energy_ineq_-}, we set $v^+:=P^- u$ and $v^-:=P^+ u$.
Then, they satisfy \eqref{eq_v1} and \eqref{eq_v2}. Therefore, the exactly same proof works.
\end{proof}

Now we are ready to prove Proposition \ref{prop_energy_est_0}.
Though we can prove it directly without using Lemma \ref{prop_energy_est_trc}, we give the proof of it by the lemma.
\begin{proof}[Proof of Proposition \ref{prop_energy_est_0}]
  Note that $\LR{P^+ f,P^- g}=\LR{P^- f,P^+ g}=0$ for any functions $f,g$.
  This implies that
  $\LR{\zm \bar{f},\zm g}=\LR{\overline{P^- f},P^+ g}+\LR{\overline{P^+ f},P^- g}$.
  So, collecting \eqref{energy_ineq_+} and \eqref{energy_ineq_-}, we obtain
  \EQQS{
    &\Biggr|\frac{d}{dt}\Biggr(\|\zm u\|^2+\sum_{j=1}^{m-1}\RE\al_{2j}\LR{D_x^{-2j}\zm \bar{u},\zm u}\Biggr)\\
    &+\sum_{j=1}^{m-1}\la_{2j}\||\ds|^{m-j}\zm u\|^2
     +\sum_{j=1}^{m}\la_{2j-1}\LR{D_x^{2(m-j)+1}\zm u,\zm u}\Biggr|
    \le C\|u\|^2.
  }
  We also note that $\ga_k=\al_{2k}$.
  Finally, it is easy to see that
  \EQQS{
    \Biggr|\frac{d}{dt}\|P_0 u\|^2+\sum_{j=1}^{m-1}\la_{2j}\||\ds|^{m-j}P_0 u\|^2+\sum_{j=1}^{m}\la_{2j-1}\LR{D_x^{2(m-j)+1}P_0 u,P_0 u}\Biggr|\le C\|u\|^2.
  }
  Therefore, we have \eqref{energy_ineq_0}.
\end{proof}

The terms $\la_j^-\||\ds|^{m-j/2}P^- u\|^2$ (resp. $\la_j^-\||\ds|^{m-j/2}P^+ u\|^2$) in \eqref{energy_ineq_+} (resp. \eqref{energy_ineq_-}) with $1\le j\le 2j^*-1$ in Lemma \ref{prop_energy_est_trc} are unfavorable in our argument to prove Proposition \ref{prop_energy_est_twist_+}.
Indeed, Proposition \ref{prop_energy_est_twist_+} is used to show ``Elliptic type'' in Theorem \ref{main_theorem} when $\la_{2j^*-1}\neq0$ under the assumption $\la_j=0$ for $1\le j\le 2(j^*-1)$.
So, we analyse the coefficients $\la^-$ below in order to ensure the condition $\la_j=0$ for $1\le j\le 2(j^*-1)$ implies $\la_j^+=\la_n^-=0$ for $1\le j\le 2(j^*-1)$ and $1\le n\le 2j^*-1$.
\begin{lem}\label{rem_recur}
  It holds that
  \EQQS{
    \la_{j+1}^-
    &=-\frac{1}{2}\sum_{l=1}^{j-1}(1+(-1)^l)
      (\RE a_l)\la_{j+1-l}^-\\
    &\quad+\frac{1}{2}\sum_{l=1}^{j-1}\sum_{k=1}^{j-l}
      (1+(-1)^{l})(\IM a_l)\RE\bar{b}_{j-l-k+1}\al_k
  }
  for $1\le j\le 2(m-1)$.
\end{lem}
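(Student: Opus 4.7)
The plan is to substitute the defining recurrence of $\al_k$ into $\la_{j+1}^- = -\sum_{k=1}^{j}\IM(\bar b_{j+1-k}\al_k)$ and reorganize. Writing $\al_k = b_k - \tfrac12\sum_{n=1}^{k-1}(1+(-1)^{k-n})\bar a_{k-n}\al_n$ splits $\la_{j+1}^-$ into a ``pure $b$'' piece $-\sum_{k=1}^{j}\IM(\bar b_{j+1-k}b_k)$ and a double sum $\tfrac12\sum_{k,n}(1+(-1)^{k-n})\IM(\bar b_{j+1-k}\bar a_{k-n}\al_n)$.

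The pure $b$ piece vanishes by an antisymmetry argument: the involution $k\leftrightarrow j+1-k$ on the summation index, together with the identity $\IM(\bar z w)=-\IM(\bar w z)$ for $z,w\in\C$, makes the sum equal to its own negative. This is the only genuinely nontrivial cancellation in the proof.

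For the remaining double sum I substitute $l := k-n$ and swap the order of summation, which brings it into the form
\[
\la_{j+1}^- = \tfrac12\sum_{l=1}^{j-1}\sum_{k=1}^{j-l}(1+(-1)^l)\,\IM\!\bigl(\bar a_l\,\bar b_{j-l-k+1}\al_k\bigr).
\]
Decomposing $\IM(\bar a_l z) = (\RE a_l)\IM z - (\IM a_l)\RE z$ with $z = \bar b_{j-l-k+1}\al_k$ separates the expression into two pieces. The $\RE a_l$ piece contains the inner sum $\sum_{k=1}^{j-l}\IM(\bar b_{(j+1-l)-k}\al_k)$, which by the definition of $\la_{j+1-l}^-$ equals $-\la_{j+1-l}^-$; this produces the first group of terms in the claim. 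The $\IM a_l$ piece is exactly the second double sum in the statement.

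The principal obstacle is index bookkeeping: performing the two changes of summation variable and tracking sign changes arising from $\RE\bar a_l = \RE a_l$ and $\IM\bar a_l = -\IM a_l$. Beyond the antisymmetry cancellation that kills the pure $b$ contribution, the remaining work is direct arithmetic, and recognizing the inner sum as $-\la_{j+1-l}^-$ is the key identification that closes the recursion.
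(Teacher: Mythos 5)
Your proof is correct and follows essentially the same route as the paper's: substitute the recurrence for $\al_k$ into $\la_{j+1}^-$, kill the pure-$b$ sum by the antisymmetry of $\IM(\bar{b}_{j+1-k}b_k)$ under $k\leftrightarrow j+1-k$, reindex via $l=k-n$, and split $\IM(\bar{a}_l z)=(\RE a_l)\IM z-(\IM a_l)\RE z$, recognizing the inner $\RE a_l$ sum as $-\la_{j+1-l}^-$. One caveat: both your derivation and the paper's own proof produce a \emph{minus} sign in front of the $(\IM a_l)$ double sum, whereas the statement prints a plus, so your final claim that this piece is ``exactly'' the second sum in the statement inherits a sign typo that lives in the statement itself; it is harmless in all uses of the lemma, since the factor $1+(-1)^l$ restricts to even $l$ and the hypotheses there force $\IM a_{2l}=0$.
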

\begin{proof}
  By the definitions of $\la_j^-$ and $\al_k$, we have
  \EQQS{
    \la_{j+1}^-
    =-\sum_{l=1}^{j} \IM b_l \bar{b}_{j-l+1}
     +\frac{1}{2}\sum_{l=1}^{j}\sum_{k=1}^{l-1}
      (1+(-1)^{l-k}) \IM \bar{b}_{j-l+1} \bar{a}_{l-k}\al_k
    =:A+B.
  }
  It is easy to see that $A=0$.
  Observe that
  \EQS{\label{eq1.1}
    \sum_{l=1}^{p}\sum_{k=1}^{l-1}c_{l-k}d_l e_k
    =\sum_{l=1}^{p-1}\sum_{k=1}^{p-l}c_l d_{l+k} e_k
  }
  for any sequences $\{c_j\},\{d_j\}$ and $\{e_j\}$.
  This implies that
  \EQQS{
    B
    &=\frac{1}{2}\sum_{l=1}^{j-1}\sum_{k=1}^{j-l}
     (1+(-1)^l)\IM\bar{b}_{j-l-k+1}\bar{a}_l\al_k\\
    &=\frac{1}{2}\sum_{l=1}^{j-1}\sum_{k=1}^{j-l}
     (1+(-1)^l)((\RE a_l)\IM\bar{b}_{j-l-k+1}\al_k
     -(\IM a_l)\RE\bar{b}_{j-l-k+1}\al_k).
  }
  Here we used the fact that $\IM cd=(\RE c)\IM d+(\IM c)\RE d$ for any $c,d\in\C$.
  This completes the proof.
\end{proof}
\begin{lem}\label{lem_la}
  Assume that there exists $j^*\in\N$ such that $\la_{2j}=0$ for $1\le j\le j^*$.
  Then, it holds that $\IM a_{2j}=\la_{2j}^+=0$ for $1\le j\le j^*$ and $\la_{j}^-=0$ for $1\le j\le 2j^*+3$.
\end{lem}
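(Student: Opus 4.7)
The plan is strong induction on $j^* \ge 0$, threaded with an auxiliary reality claim for the quantity $T_j := \sum_{k=1}^{j-1}\bar{b}_{2(j-k)}\ga_k$. The base case $j^* = 0$ has vacuous first assertion and reduces to checking $\la_n^- = 0$ for $n = 1, 2, 3$. Since $\al_1 = b_1$ and $\al_2 = b_2$ (the corrections in Definition \ref{def2} vanish because $1 + (-1) = 0$), we see immediately that $\la_1^- = 0$, $\la_2^- = -\IM|b_1|^2 = 0$, and $\la_3^- = -\IM(\bar{b}_2 b_1 + \bar{b}_1 b_2) = -\IM(2\RE(\bar{b}_2 b_1)) = 0$.

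For the inductive step, fix $j^* \ge 1$ and assume the lemma for $j^* - 1$; in particular $\IM a_{2n} = 0$ for $1 \le n \le j^* - 1$ and $\la_n^- = 0$ for $1 \le n \le 2j^* + 1$. Under the added hypothesis $\la_{2j^*} = 0$, three things must be proved: (A) $\IM a_{2j^*} = 0$; (B) $\la_{2j^*}^+ = 0$; (C) $\la_{2j^*+2}^- = \la_{2j^*+3}^- = 0$. Step (B) is immediate from Remark \ref{rem2.1}, which gives $\la_{2j^*} = \la_{2j^*}^+ + \la_{2j^*}^-$, combined with $\la_{2j^*}^- = 0$ by induction. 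For (C) I will apply the recurrence of Lemma \ref{rem_recur} with $j = 2j^*+1$ and $j = 2j^*+2$: the factor $(1 + (-1)^l)$ kills odd $l$, so only $l = 2n$ with $1 \le n \le j^*$ contribute, and for each such $n$ the first sum carries $\la_m^-$ with $m \le 2j^* + 1$ (zero by induction) while the second sum carries $\IM a_{2n}$ (zero for $n \le j^* - 1$ by induction, and for $n = j^*$ by step (A)), so both $\la_{2j^*+2}^-$ and $\la_{2j^*+3}^-$ vanish.

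The heart of the argument is (A). From Definition \ref{definition_lambda} we have $\la_{2j^*} = 2\IM a_{2j^*} - 2\IM T_{j^*}$, so the claim reduces to $T_{j^*} \in \R$. Under the inductive hypothesis the coefficients $a_{2m}$ with $m \le j^*-1$ are real, hence $\bar{a}_{2(k-l)} = a_{2(k-l)}$ throughout the recurrence $\ga_k = b_{2k} - \sum_{l=1}^{k-1}\bar{a}_{2(k-l)}\ga_l$ for $k \le j^*$. Substituting this recurrence into $T_j$ and swapping the order of summation (with the shift $m = k - l$) produces the closed identity
\EQQ{
T_j = U_j - \sum_{m=1}^{j-2} a_{2m}\, T_{j-m}, \qquad U_j := \sum_{k=1}^{j-1}\bar{b}_{2(j-k)}\, b_{2k},
}
valid for $2 \le j \le j^*$. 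The involution $k \leftrightarrow j - k$ identifies $U_j$ with its complex conjugate, so $U_j \in \R$; a secondary induction on $j$ then yields $T_j \in \R$ for every $2 \le j \le j^*$, and (A) follows. The main obstacle is exactly this reality claim for $T_{j^*}$; at the generating function level it reflects the identity $\bar{B}(z)\Ga(z) = |B(z)|^2/(1 + A(z))$ having real coefficients when $A(z) := \sum_{n \ge 1} a_{2n} z^n$ is real, but the swap-of-summation identity above provides a direct elementary route and is the only genuinely new algebraic ingredient; everything else is bookkeeping built on Lemma \ref{rem_recur} and Remark \ref{rem2.1}.
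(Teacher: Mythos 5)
Your proof is correct and follows essentially the same route as the paper's: the same unconditional base case $\la_1^-=\la_2^-=\la_3^-=0$, the same use of Remark \ref{rem2.1} for $\la_{2j}^+$ and of Lemma \ref{rem_recur} for propagating the vanishing of $\la^-$, and the same key computation for $\IM a_{2j^*}=0$ --- your reality claim for $T_j$ is exactly the paper's claim $M=\sum_{l}\IM\bar{b}_{2(j-l+1)}\ga_l=0$, proved by the identical swap of summation and the symmetry of $\sum_k\bar{b}_{2(j-k)}b_{2k}$. The only difference is organizational: you establish $T_j\in\R$ by a secondary induction inside step (A), whereas the paper carries $\IM T_j=0$ as one more conjunct of its main inductive claim.
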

\begin{proof}
  First note that $\la_1^-=\la_2^-=\la_3^-=0$ even without the hypothesis.
  Indeed, it is clear that $\la_1^-=0$.
  We also have $\la_2^-=-\IM\bar{b}_1\al_1=0$ and $\la_3^-=-\IM\bar{b}_2\al_1-\IM\bar{b}_1\al_2=0$ since $\al_1=b_1$ and  $\al_2=b_2$.
  Assume that there exists $j^*\in\N$ such that $\la_{2j}=0$ for $1\le j\le j^*$.
  The rest of proof proceeds by the induction on $j$.
  We prove the following claim: it holds that
  $$\IM a_{2j}=\la_{2j}^+=\la_{2j+2}^-=\la_{2j+3}^-=\sum_{k=1}^{j-1}\IM\bar{b}_{2(j-k)}\al_{2k}=0$$
  for $1\le j\le j^*$.
  It is easy to see that the claim above with $j=1$ follows.
  Indeed, by the definition of $\al_j$, we obtain
   $\al_3=b_3-\bar{a}_2b_1$ and $\al_4=b_4-\bar{a}_2b_2$, which implies that
  \EQQS{
    \la_4^-
    &=-\IM\bar{b}_3b_1-\IM|b_2|^2-\IM\bar{b}_1b_3+\IM|b_1|^2\bar{a}_2=0,\\
    \la_5^-
    &=-\IM\bar{b}_4b_1-\IM\bar{b}_3b_2-\IM\bar{b}_2(b_3-\bar{a}_2b_1)
    -\IM\bar{b}_1(b_4-\bar{a}_2b_2)=0
  }
  since $\la_2=2\IM a_2=0$. We also have $\la_2^+=\IM\bar{b}_2\al_2=0$ easily.
  Next, we assume that the claim above holds for $j(\le j^*-1)$.
  By the hypothesis, it holds that $\la_{2j+2}^-=\la_{2j+3}^-=0$.
  Thus, by Remark \ref{rem2.1}, we have $\la_{2j+2}^+=0$.
  We claim that $M:=\sum_{l=1}^{j}\IM\bar{b}_{2(j-l+1)}\ga_{l}=0$.
  Indeed, we see from the definition of $\ga_l$ that
  \EQQS{
    M
    =\sum_{l=1}^{j}\IM\bar{b}_{2(j-l+1)}b_{2l}
      -\sum_{l=1}^{j}\sum_{k=1}^{l-1}\IM \bar{b}_{2(j-l+1)}\bar{a}_{2(l-k)}\ga_k
    =:A+B.
  }
  It is easy to see that $A=0$.
  We have
  \EQQS{
    B
    &=-\sum_{l=1}^{j-1}\sum_{k=1}^{j-l}
      \IM\bar{b}_{2(j-l-k+1)}\bar{a}_{2l}\ga_k\\
    &=-\sum_{l=1}^{j-1}(\RE a_{2l})\sum_{k=1}^{j-l}
      \IM\bar{b}_{2(j-l-k+1)}\ga_k
      +\sum_{l=1}^{j-1}(\IM a_{2l})\sum_{k=1}^{j-l}
        \RE\bar{b}_{2(j-l-k+1)}\ga_k=0
  }
  by \eqref{eq1.1} and the hypothesis.
  This shows that $\IM a_{2j+2}=0$ by the definiton of $\la_{2j+2}$.
  By Lemma \ref{rem_recur}, we obtain
  $\la_{2j+4}^-=\la_{2j+5}^-=0$, which completes the proof.
\end{proof}
\begin{rem}\label{rem_la}
  From the proof of the above lemma, we also see that
  $$\la_{2j^*+2}=2\IM a_{2j^*+2},\quad \la_{2j^*+4}=2\IM a_{2j^*+4}$$
  when $\la_{2j}=0$ for $1\le j\le j^*$.
\end{rem}

Now, we prove Proposition \ref{prop_energy_est_twist_+}.
\begin{proof}[Proof of Proposition \ref{prop_energy_est_twist_+}]
We give only the proof of \eqref{eq_twist_+} since we can show \eqref{eq_twist_-} in the same manner.
When $j^*=1$, we see from the definition that $\la_n^-=0$ for $n=1,2,3$.
When $j^*\ge2$, Lemma \ref{lem_la} implies that $\la_{j}^+=\la_j^-=0$ for $1\le j\le 2(j^*-1)$ and $\la_{2j^*-1}^-=\la_{2j^*}^-=\la_{2j^*+1}^-=0$.
This together with Remark \ref{rem2.1} implies $\la_{2j^*-1}^+\neq0$.
By \eqref{energy_ineq_+}, interpolation inequalities and the Young inequality, we have
  \EQQ{
    &\Biggr|\frac{d}{dt}\Biggr(\|P^+ u\|^2
    +\sum_{j=1}^{2m-1}\RE\al_j\LR{D_x^{-j} \overline{P^- u}, P^+ u}\Biggr)
    +\sum_{j=2j^*+2}^{2m-1} \la_j^-\||\ds|^{m-j/2}P^- u\|^2\\
    &+\la_{2j^*-1}^+\||\ds|^{m-j^*+1/2}P^+ u\|^2 \Biggr| \lesssim \|u\|^2+\||\ds|^{m-j^*}P^+ u\|^2
  }
Thus, we only need to show
  \EQ{\label{ee1}
    &\Biggr|\frac{d}{dt}\sum_{k=1}^{2(m-j^*-1)} \be_k^+ F_k^-(u)
    -\sum_{j=2j^*+2}^{2m-1} \la_j^-\||\ds|^{m-j/2}P^- u\|^2 \Biggr|\\
    &\lesssim \|u\|^2+\||\ds|^{m-j^*}P^+ u\|^2.
  }
Put $v=|\ds|^{-(k+2)/2} \zm u$.
Since $v$ satisfies \eqref{2mLSE}, by \eqref{energy_ineq_-}, we have
  \EQQ{
    &\Biggr|\frac{d}{dt}\Biggr(\| P^- v\|^2
    +\sum_{j=1}^{2m-1}\RE\al_j\LR{D_x^{-j} \overline{P^+ v}, P^- v}\Biggr)
    +\sum_{j=2j^*-1}^{2m-1}(-1)^j\la_j^+\||\ds|^{m-j/2}P^- v\|^2\Biggr| \\
    &\lesssim\|v\|^2+\| |\ds|^{m-j^*}P^+ v\|^2.
  }
Thus, we obtain
  \EQQ{
    &\Biggr|\sum_{k=1}^{2(m-j^*-1)} \be_k^+ \Big(\frac{d}{dt} F_k^-(u)
    +\sum_{j=2j^*-1}^{2m-k-3}(-1)^j\la_j^+\||\ds|^{m-(j+k+2)/2}P^- u\|^2\Big)\Biggr|\\
    & \lesssim \|u\|^2+\| |\ds|^{m-j^*}P^+ u\|^2.
  }
By \eqref{eq4.1}, we have
  \EQQS{
  &\sum_{k=1}^{2(m-j^*-1)} \sum_{j=2j^*-1}^{2m-k-3}(-1)^j\be_k^+\la_j^+\||\ds|^{m-(j+k+2)/2}P^- u\|^2\\
  &=\sum_{k=1}^{2(m-j^*-1)}\sum_{j=1}^{k}(-1)^{k-j+1}
    \be_j^+\la_{2j^*+k-j-1}^+\||\ds|^{m-j^*-(k+1)/2}P^- u\|^2.
  }
Therefore, by the definition of $\be_k^+$, we conclude \eqref{ee1}.
\end{proof}


\section{Proof of main theorem}
In this section, we show Theorem \ref{main_theorem}.
\begin{defn}
  For $f\in L^2(\M)$ and $N>0$, we define
  \EQQS{
    E(f;N)
    :=\|f\|^2+N\|\ds^{-m}\zm f\|^2
      +\sum_{j=1}^{m-1}\RE\ga_j\LR{D_x^{-2j}\zm\bar{f},\zm f}.
  }
  We choose $N$ sufficiently large so that Lemma \ref{lem_comparison} holds.
  If there is no confusion, we write $E(f):=E(f;N)$.
\end{defn}

\begin{lem}\label{lem_comparison}
  Let $N>0$ be sufficiently large.
  Then, for any $f\in L^2(\M)$ it holds that
  \EQQS{
    \frac{1}{2}E(f)\le \|f\|^2+N\|\ds^{-m}\zm f\|^2
    \le 2E(f).
  }
\end{lem}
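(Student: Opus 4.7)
The plan is to show the two-sided equivalence by establishing that the cross term
\[
C(f):=\sum_{j=1}^{m-1}\RE\ga_j\LR{D_x^{-2j}\zm\bar f,\zm f}
\]
satisfies $|C(f)|\le \tfrac{1}{2}(\|f\|^2+N\|\ds^{-m}\zm f\|^2)$ once $N$ is chosen sufficiently large depending only on $m$ and $\{\ga_j\}$. Writing $A:=\|f\|^2+N\|\ds^{-m}\zm f\|^2$, the identity $E(f)=A+C(f)$ together with this bound yields $\tfrac{1}{2}A\le E(f)\le \tfrac{3}{2}A$, which implies the stated conclusion.

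First, I would estimate each summand by the Cauchy--Schwarz inequality,
\[
|\LR{D_x^{-2j}\zm\bar f,\zm f}|\le \|\ds^{-2j}\zm f\|\,\|f\|,
\]
using that the $L^2$ norm is preserved by complex conjugation and that $|D_x^{-2j}|=|\ds^{-2j}|$ on the Fourier side. The key point is to interpolate $\|\ds^{-2j}\zm f\|$ between $\|f\|$ and $\|\ds^{-m}\zm f\|$ using that the support of the multiplier defining $\zm$ is contained in $\{|\x|\ge 1\}$. When $2j\ge m$ the pointwise bound $|\x|^{-2j}\le |\x|^{-m}$ on that support gives $\|\ds^{-2j}\zm f\|\le \|\ds^{-m}\zm f\|$. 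When $2j<m$, H\"older's inequality applied to $\int|\x|^{-4j}|\widehat f(\x)|^2\,d\x$ with exponents $m/(m-2j)$ and $m/(2j)$ gives $\|\ds^{-2j}\zm f\|\le \|f\|^{1-2j/m}\|\ds^{-m}\zm f\|^{2j/m}$. In either case one obtains
\[
|\LR{D_x^{-2j}\zm\bar f,\zm f}|\le \|\ds^{-m}\zm f\|^{s_j}\|f\|^{2-s_j},\qquad s_j:=\min(1,2j/m)\in(0,1].
\]

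Next I would apply the weighted Young inequality $X^{s_j}Y^{2-s_j}\le \eps X^2+C_m\eps^{-s_j/(2-s_j)}Y^2$ with weight $\eps=\eps_0 N$,
\[
\|\ds^{-m}\zm f\|^{s_j}\|f\|^{2-s_j}\le \eps_0 N\|\ds^{-m}\zm f\|^2+C_m(\eps_0 N)^{-s_j/(2-s_j)}\|f\|^2.
\]
Choosing $\eps_0>0$ so that $\eps_0\sum_{j=1}^{m-1}|\ga_j|\le 1/4$, and then $N$ so large that $C_m|\ga_j|(\eps_0 N)^{-s_j/(2-s_j)}\le 1/(4(m-1))$ for every $j$, summation over $j$ yields $|C(f)|\le \tfrac{1}{4}(N\|\ds^{-m}\zm f\|^2+\|f\|^2)$, which is more than sufficient.

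I do not anticipate any real obstacle; the lemma is a routine norm equivalence. The only delicate point is the order of the quantifiers: one first fixes $\eps_0$ to control the contribution to $N\|\ds^{-m}\zm f\|^2$ after summation in $j$, and only afterwards takes $N$ large to absorb the $\|f\|^2$ contribution, which is possible precisely because $s_j/(2-s_j)>0$ for every $j\in\{1,\dots,m-1\}$.
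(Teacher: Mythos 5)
Your proof is correct and follows essentially the same route as the paper's: the paper likewise bounds the cross terms $\sum_{j=1}^{m-1}|\RE\ga_j\LR{D_x^{-2j}\zm\bar f,\zm f}|$ by $\tfrac12\|f\|^2+C\|\ds^{-m}\zm f\|^2$ via the Gagliardo--Nirenberg (interpolation) and Young inequalities and then takes $N=2C$. Your write-up merely makes the interpolation exponents and the choice of Young weights explicit, which is a cosmetic difference.
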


\begin{proof}
  The Gagliardo-Nirenberg inequality and the Young inequality show that
  \EQS{\label{eq3.1}
    \sum_{j=1}^{m-1}|\RE\ga_j\LR{D_x^{-2j}\zm\bar{f},\zm f}|
    \le \frac{1}{2}\|f\|^2+C\|\ds^{-m}\zm f\|^2.
  }
  So, it suffices to choose $N=2C$.
\end{proof}

We prove the first part of Theorem \ref{main_theorem}.

\begin{proof}[Proof of ``Dispersive type'' in Theorem \ref{main_theorem}]
  We consider our problem only on $[0,\I)$ since the result on $(-\infty,0]$ follows from the same argument.
  Let $T>0$, which can be arbitrary large.
  We first show the a priori estimate $\sup_{t\in[0,T]}\|u(t)\|\le C\|\vp\|$.
  We assume that $u$ satisfies \eqref{2mLSE} and \eqref{initial}.
  Then, it is easy to see that
  $\frac{d}{dt}\|\ds^{-m}\zm u\|^2\le 2|\LR{D_t \ds^{-2m}\zm u,\zm u}| \le C\|u\|^2$.
  This together with \eqref{energy_ineq_0}, Lemma \ref{lem_comparison} and
  $\la_j=0$ for $1\le j\le 2m-1$ implies that
  $\frac{d}{dt} E(u(t))\le CE(u(t))$ on $[0,T]$.
  Thus, by the Gronwall inequality and Lemma \ref{lem_comparison}, we obtain the a priori estimate.
  Next, we show the existence.
  Let $\vp_n=\F^{-1} \chi(|\x|<n) \F \vp$ for $n\in \N$.
  Then, we have the solution $u_n$ of \eqref{2mLSE} with $u_n(0)=\vp_n$ by \eqref{explicit_formula}.
  Moreover, $u_n \in C([0,T];L^2(\M))$ since $|\sum_{j=0}^{2m}\x^{2m-j}X_j| \le C(\{a_j\},\{b_j\},n)$ for $|\x|<n$.
  Since $\{\vp_n\}$ is a Cauchy sequence in $L^2(\M)$, by the a priori estimate, we conclude $\{u_n\}$ is
  also a Cauchy sequence in $C([0,T];L^2(\M))$. Thus, we obtain the solution $u \in C([0,T];L^2(\M))$
  of \eqref{2mLSE}--\eqref{initial} as the limit of $u_n$.
  Finally, the uniqueness easily follows from the a priori estimate.
\end{proof}

\begin{proof}[Proof of ``Parabolic type'' in Theorem \ref{main_theorem}]
  We use the argument from the proof of Theorem 1.2 in \cite{Tsugawa17}.
  We consider only the case $\la_{2j^*}>0$ since the other case follows from the same argument.
  Let $T>0$, which can be arbitrary large.
  By the Gagliardo-Nirenberg inequality and the Young inequality, we have
  \EQQ{
    \biggl|\sum_{j=j_*+1}^{m-1}\la_{2j}\||\ds|^{m-j} u\|^2
     +\sum_{j=j_*+1}^{m}\la_{2j-1}\LR{D_x^{2(m-j)+1} u, u}\biggr|
     \le \frac{1}{2}\la_{2j^*}\||\ds|^{m-j^*}u\|^2+C\|u\|^2.
  }
  Recall that $\la_j=0$ for $1\le j\le 2j^*-1$.
  Therefore, in the same manner as the proof of ``Dispersive type,'' we obtain the a priori estimate:
  \EQQS{
    \sup_{t\in[0,T]}
    \Biggr(\|u(t)\|^2+\frac{\la_{2j^*}}{2}\int_0^t\||\ds|^{m-j^*}u(\ta)\|^2d\ta\Biggr)\le C\|\vp\|^2.
  }
  It then follows that we have the unique existence of the solution
  $u \in C([0,T];L^2(\M)) \cap L^2([0,T];H^{m-j^*}(\M))$, which implies that
  $u(t) \in H^{m-j^*}(\M)$ for a.e. $t\in[0,T]$.
  Let $0<\e<T$.
  Then there exists $t_0\in(0,\e/2)$ such that $u(t_0)\in H^{m-j^*}(\M)$.
  Since $\LR{\p_x}^{m-j^*} u$ satisfies \eqref{2mLSE}--\eqref{initial} with initial data
  $\vp:=\LR{\p_x}^{m-j^*}u(t_0)\in L^2(\M)$,
  applying the same argument as above, we conclude
  $\LR{\p_x}^{m-j^*}u \in C([t_0,T];L^2(\M)) \cap L^2([t_0,T];H^{m-j^*}(\M))$.
  That is, $u\in C([t_0,T];H^{m-j^*}(\M))\cap L^2([t_0,T];H^{2(m-j^*)}(\M))$.
  We can choose $t_1$ so that $\e/2<t_1<\e/2+\e/4$ and $u(t_1)\in H^{2(m-j^*)}(\M)$.
  Again, applying the same argument as above with the initial data $\vp:=\LR{\p_x}^{2(m-j^*)}u(t_0)\in L^2(\M)$,
  we conclude $u\in C([t_1,T];H^{2(m-j^*)}(\M))\cap L^2([t_1,T];H^{3(m-j^*)}(\M))$.
  By repeating this process, we conclude $u\in C([\e,T];H^{k(m-j^*)}(\M))$ for any $k \in \N$, which implies
  $u\in C^\ell([\e,T];H^{k(m-j^*)-2m\ell}(\M))$ for any $k, \ell \in \N$ by \eqref{2mLSE}.
  By the Sobolev embedding, we obtain $u\in C^\infty([\e,T]\times\M)$.
  Since we can take $\e>0$ arbitrary small and $T>0$ arbitrary large, we conclude
  $u\in C^\I((0,\infty)\times\M)$.
  Finally, we show the nonexistence result by contradiction.
  Assume that there exists a solution $u \in C((-\de,0]; L^2(\M))$ of \eqref{2mLSE}--\eqref{initial}
  with $\vp \in L^2(\M)\setminus C^\infty(\M)$.
  We take $t_0$ such that $-\de <t_0<0$.
  Then, as we proved above, we have $u\in C^\I((t_0,0]\times\M)$, which contradicts
  to the assumption $\vp=u(0) \not\in C^\infty(\M)$.
\end{proof}

The following proposition is the main tool to show the result for ``Elliptic type'' in Theorem \ref{main_theorem}.
\begin{prop}[A smoothing for ``Elliptic type"]\label{prop_smoothing}
Let $u\in C([t_0,t_1]; L^2(\M))$ satisfy \eqref{2mLSE}.
Assume that there exists $j^*\in \N$ such that $\la_j=0$ for $1\le j < 2j^*-1$ and $\la_{2j^*-1}>0$ (resp. $<0$).
Then, it follows that
\EQS{
&P^+ u \ (\text{resp.} \, P^- u)\in C((t_0,t_1];H^{1/2}(\M)) \quad(\text{forward smoothing}),\label{eq_smoothing1}\\
&P^- u \ (\text{resp.} \, P^+ u)\in C([t_0,t_1);H^{1/2}(\M)) \quad(\text{backward smoothing}).\label{eq_smoothing2}
}
In particular, it holds that $u\in C^\infty((t_0,t_1)\times \M)$.
\end{prop}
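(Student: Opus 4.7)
I would prove the case $\la_{2j^*-1}>0$ (the opposite sign is entirely analogous, via swapping $P^+$ with $P^-$). A preliminary observation: Lemma \ref{lem_la} applied with $j^*-1$ in place of $j^*$ (trivial when $j^*=1$) together with Remark \ref{rem2.1} forces $\la_{2j^*-1}^-=0$ under our hypothesis, so $\la_{2j^*-1}^+=\la_{2j^*-1}>0$. Thus \eqref{eq_twist_+} and \eqref{eq_twist_-} carry the correct signs to serve as smoothing estimates.

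\textbf{Step 1 (integrated regularity).} The correction terms under $\frac{d}{dt}$ in \eqref{eq_twist_+} are bounded by $C\|u\|^2$ (negative powers of $D_x$ and $|\ds|$ being bounded on the range of $P^\pm$), and $C\||\ds|^{m-j^*}P^+u\|^2$ is absorbed into $\frac12\la_{2j^*-1}^+\||\ds|^{m-j^*+1/2}P^+u\|^2$ by Gagliardo--Nirenberg and Young. Since $u\in C([t_0,t_1];L^2)$ need not be spatially smooth, I would first run the manipulation for the cutoffs $u_N:=\F^{-1}\chi(|\xi|\le N)\F u$, which are smooth in $x$, solve \eqref{2mLSE} by the constant-coefficient property, and converge to $u$ in $C_tL^2$. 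Integrating over $[t_0,t]$ and sending $N\to\infty$ by Fatou gives
\EQQ{
\|P^+u(t)\|^2+\int_{t_0}^{t}\||\ds|^{m-j^*+1/2}P^+u(\tau)\|^2\,d\tau\lesssim\sup_{\tau\in[t_0,t_1]}\|u(\tau)\|^2,
}
and the analogous backward integration of \eqref{eq_twist_-} from $t_1$ yields the same bound for $P^-u$. Hence $u\in L^2([t_0,t_1];H^{m-j^*+1/2})$, and there exist sequences $\tau_k\downarrow t_0$ and $\sigma_k\uparrow t_1$ along which $u\in H^{m-j^*+1/2}$.

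\textbf{Step 2 (pointwise bootstrap and $C^\infty$).} For any $t_\star\in(t_0,t_1]$ pick $\tau_k<t_\star$. The function $v:=\LR{\p_x}^{m-j^*+1/2}u$ also solves \eqref{2mLSE} by the remark after Theorem \ref{main_theorem}, with $v(\tau_k)\in L^2$. Running Step 1 for the cutoffs $v_N:=\LR{\p_x}^{m-j^*+1/2}P_{\le N}u$ on $[\tau_k,t_1]$ (the bounds are uniform in $N$ since $v(\tau_k)\in L^2$ and $v\in L^2_tL^2_x$) and passing to the limit gives $P^+u\in L^\infty([\tau_k,t_1];H^{m-j^*+1/2})$. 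Letting $\tau_k\downarrow t_0$ yields $P^+u(t)\in H^{m-j^*+1/2}\subset H^{1/2}$ for every $t\in(t_0,t_1]$; continuity in $H^{1/2}$ follows by interpolating this uniform bound with $C_tL^2$ (when $m>j^*$) or by upgrading weak to strong via norm continuity of the energy (when $m=j^*$). The symmetric backward argument using \eqref{eq_twist_-} and $\sigma_k\uparrow t_1$ yields \eqref{eq_smoothing2} for $P^-u$. Iterating the bootstrap with $\LR{\p_x}^{k(m-j^*+1/2)}u$ for $k=2,3,\ldots$ while slightly contracting the time interval at each step puts $P^\pm u$ in $L^\infty_{loc}$ of every $H^s$, so $u(t,\cdot)\in C^\infty(\M)$ for $t\in(t_0,t_1)$; time-smoothness then follows from \eqref{2mLSE} by trading each $\p_t$ for $2m$ spatial derivatives.

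\textbf{Main obstacle.} The central technical point is justifying the energy estimate for $v=\LR{\p_x}^s u$ when $v\in C_tL^2$ is not a priori known. The cutoff $v_N$ is genuinely smooth, automatically satisfies \eqref{2mLSE}, and the pointwise bounds $\|v_N(\tau_k)\|\le\|v(\tau_k)\|<\infty$ and $\|v_N(\tau)\|\le\|v(\tau)\|$ make the right-hand side of the energy inequality uniform in $N$; Fatou then converts the approximate inequalities into genuine regularity for $v$. A secondary delicacy is the upgrade from $L^\infty_t$ to $C_t$, which requires the standard interpolation plus norm-continuity argument.
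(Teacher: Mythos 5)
Your overall strategy (integrate the twisted energy inequalities \eqref{eq_twist_+}--\eqref{eq_twist_-} to get a space-time bound, pick a good time slice where the data is regular, then bootstrap) is the same as the paper's, and your Step 1 and the preliminary identification $\la_{2j^*-1}^+=\la_{2j^*-1}$ are fine. The genuine gap is in Step 2, in the size of the jump. When you apply the energy inequality to $v=\LR{\p_x}^{s}u$ with $s=m-j^*+1/2$, the quantity whose time derivative you control is not $\|P^+v\|^2$ but $\|P^+v\|^2+G^+(v)$, where $G^+(v)$ contains $\RE\al_1\LR{D_x^{-1}\overline{P^-v},P^+v}$ (and the $F_k^-(v)$ terms). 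After integrating on $[\tau_k,t]$ you must bound $|G^+(v_N(t))|$ at the \emph{running endpoint} $t$ from below, and this term is of size $\|\LR{\p_x}^{s-1}P^-u_N(t)\|\,\|P^+v_N(t)\|$. For $m>j^*$ one has $s-1=m-j^*-1/2>0$, and $\|\LR{\p_x}^{s-1}P^-u(t)\|$ is not controlled pointwise in $t$: the forward estimate only controls the $P^+$ piece pointwise (the $P^-$ piece only in $L^2_t$), so the lower bound on the left-hand side is not uniform in $N$ and Fatou gives nothing. The same obstruction (with $P^+$ and $P^-$ exchanged) blocks the simultaneous backward estimate, so you cannot break the circularity by running both at once. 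Your later iterates $\LR{\p_x}^{k(m-j^*+1/2)}u$ inherit the same problem.

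This is exactly why the paper gains only $1/2$ a derivative per step: for $w=|\ds|^{1/2}u$ every operator occurring in $G^+(w)$ still has a symbol of non-positive order on $|\xi|\ge1$ (e.g.\ $D_x^{-j}|\ds|$ with $j\ge1$), so $|G^+(|\ds|^{1/2}u(t))|\le C\|u(t)\|^2\le CM^2$ uniformly in $t$, and the endpoint evaluation is harmless. The paper then establishes $H^{1/2}$ regularity of $P^+u$ on $(t_0,t_1]$ and of $P^-u$ on $[t_0,t_1)$, intersects to get $u\in C((t_0,t_1);H^{1/2})$, and only then repeats with another half derivative on a (nested) open interval. To repair your argument you should either reduce the jump to $1/2$ (or at most an amount for which $s-1\le$ the regularity of $P^\mp u$ already established pointwise in time from the previous iteration), or restructure the induction so that both frequency halves are upgraded before the exponent is raised. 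Your closing remarks on upgrading $L^\infty_t$ to $C_t$ (interpolation when the bound exceeds $H^{1/2}$, norm-plus-weak continuity at the endpoint case) are consistent with the paper but would need to be carried out; the paper does the latter via the integrated energy identity and dominated convergence.
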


\begin{proof}
  We consider only the case $\la_{2j^*-1}^+>0$ since the same proof works for the case $\la_{2j^*-1}^+<0$.
  For simplicity, set
  $$G^+(u):=\sum_{j=1}^{2m-1}\RE\al_j\LR{D_x^{-j} \overline{P^- u}, P^+ u}+\sum_{k=1}^{2(m-j^*-1)} \be_k^+ F_k^-(u),$$
  where $F_k^-$ is defined in Proposition \ref{prop_energy_est_twist_+} and $\{\al_j\}$ and $\{\be_k\}$
  are defined in Definiton \ref{def2}.
  Set $M:=\sup_{t\in[t_0,t_1]}\|u(t)\|$.
  Note that $\sup_{t\in[t_0,t_1]} (|G^+(u(t))| + |\, G^+(|\p_x|^{1/2}u(t))|) \le CM$
  and $G^+(|\p_x|^{1/2}u(t))$ is continuous on $[t_0,t_1]$
  by the presence of $D_x^{-j}$ in the definition of $G^+(u)$ above.
  By the Gagliardo-Nirenberg inequality and the Young inequality, we have
  \EQQ{
    \| |\ds|^{m-j^*}Qu\|^2\le \de \| |\ds|^{m-j^*+1/2}Qu\|^2+C\de^{-1}\|u\|^2
  }
  for $\de>0$, $Q=P^+$ or $P^-$.
  Take $\de>0$ sufficiently small. Then, this together with \eqref{eq_twist_+} and \eqref{eq_twist_-} yields
  \EQQS{
    \la_{2j^*-1}^+\int_{t_0}^{t_1}\||\ds|^{m-j^*+1/2}Q u(\ta)\|^2d\ta
    \le C(M)(1+|t_1-t_0|),
  }
 for $Q=P^+$ or $P^-$.
  By the interpolation, we also have
  \EQ{\label{e2}
  &\int_{t_0}^{t_1}\||\ds|^{s}u(\ta)\|^2d\ta\\
  &=\int_{t_0}^{t_1}(\||\ds|^{s}P^-u(\ta)\|^2+\||\ds|^{s}P_0u(\ta)\|^2+\||\ds|^{s}P^+u(\ta)\|^2)d\ta\\
  &\le C(M,\la_{2j^*-1}^+)(1+|t_1-t_0|)
  }
  for $0\le s\le m+j^*-1/2$.
  It then follows that $\||\ds|^{m-j^*+1/2} u(t)\|<\I$ for a.e. $t\in[t_0,t_1]$.
  Then, for any $\e>0$ there exists $t_*\in(t_0,t_0+\e)$ such that $\||\ds|^{m-j^*+1/2} u(t_*)\|<\I$.
  Note that \eqref{eq_twist_+} holds even if we replace $u$ with $|\ds|^{1/2}u$ since $|\ds|^{1/2}u$ satisfies \eqref{2mLSE}.
  Thus,
  \EQ{\label{e1}
    &\bigg|\frac{d}{dt}\big(\||\ds|^{1/2}P^+ u\|^2
    +G^+(|\ds|^{1/2}u) \big) + \la_{2j^*-1}^+\||\ds|^{m-j^*+1}P^+ u\|^2\bigg|\\
    &\le C \||\ds|^{1/2}u\|^2+ C\||\ds|^{m-j^*+1/2}P^+ u\|^2,
  }
  By the Gagliardo-Nirenberg inequality and the Young inequality, we have
  \EQQ{
    \| |\ds|^{m-j^*+1/2}P^+ u\|^2\le \de \| |\ds|^{m-j^*+1}P^+u\|^2+C\de^{-1}\||\ds|^{1/2}u\|^2
  }
  for $\de>0$.
  Taking $\de>0$ sufficiently small and integrating \eqref{e1} on $[t_*,t](\subset[t_0,t_1])$ with \eqref{e2}, we obtain
  \EQ{\label{e3}
    &\||\ds|^{1/2}P^+ u(t)\|^2
     +\frac{\la_{2j^*-1}^+}{2}\int_{t_*}^t\||\ds|^{m-j^*+1}P^+u(\ta)\|^2d\ta\\
    &\le C(M,\la_{2j^*-1}^+,|t_1-t_0|)+\||\ds|^{1/2}P^+ u(t_*)\|^2<\I
  }
  since $u(t_*)\in H^{m-j^*+1/2}(\M)$.
  Therefore, by \eqref{e1} again, it follows that
  for any $t_*\le t'\le t\le t_1$
  \EQQS{
    &\Big|\||\ds|^{1/2} P^+ u(t)\|^2-\||\ds|^{1/2} P^+ u(t')\|^2\Big|\\
    &\le \Big|\Big[|\ds|^{1/2} P^+ u(\ta)\|^2
     +G^+(|\ds|^{1/2}u)\Big]_{\ta=t'}^{\ta=t}
     +\la_{2j^*-1}^+\int_{t'}^t\||\ds|^{m-j^*+1}P^+u(\ta)\|^2d\ta\Big|\\
    &\quad+\Big|\Big[G^+(|\ds|^{1/2}u)\Big]_{\ta=t'}^{\ta=t}\Big|
     +\la_{2j^*-1}^+\int_{t'}^t\||\ds|^{m-j^*+1}P^+u(\ta)\|^2d\ta\\
    &\le C\int_{t'}^{t}\||\ds|^{1/2}u(\ta)\|^2d\ta
     +C\int_{t'}^t\||\ds|^{m-j^*+1/2}P^+u(\ta)\|^2d\ta\\
    &\quad+\la_{2j^*-1}^+\int_{t'}^t\||\ds|^{m-j^*+1}P^+u(\ta)\|^2d\ta
    +\Big|\Big[G^+(|\ds|^{1/2}u)\Big]_{\ta=t'}^{\ta=t}\Big|.
  }
  \eqref{e2}, \eqref{e3} and the dominated convergence theorem imply that the right-hand side goes to $0$ as $|t-t'|\to 0$, which shows that $\||\ds|^{1/2} P^+ u(t)\|$ is continuous on $[t_*,t_1]$.
  The fact $P^+u\in C([t_0,t_1];L^2(\M))$ with $P^+u\in L^\infty([t_*,t_1];H^{1/2}(\M))$ yields $P^+u \in C_w([t_*,t_1];H^{1/2}(\M))$.
  Combining the continuity of $\||\ds|^{1/2} P^+ u(t)\|$ and the weak continuity of $P^+ u(t)$ in $H^{1/2}(\M)$,
  we obtain $P^+u\in C([t_*,t_1];H^{1/2}(\M))$.
  Since we can take $\e>0$ arbitrary small, we get
  $P^+u\in C((t_0,t_1];H^{1/2}(\M))$.
  We also obtain $P^-u\in C([t_0,t_1);H^{1/2}(\M))$ in the same manner.
  Therefore, $u=P^{-} u+P_0 u+P^{+} u \in C((t_0,t_1);H^{1/2}(\M))$.
  By repeating this process, we also obtain $u \in C((t_0,t_1);H^{k/2}(\M))$ for any $k\in \N$,
  which yields $u \in C^\infty((t_0,t_1)\times \M))$ since $u$ satisfies \eqref{2mLSE}.
\end{proof}

\begin{proof}[Proof of ``Elliptic type'' in Theorem \ref{main_theorem}]
  We use the argument from the proof of Theorem 1.2 in \cite{TsugawaP}.
  We consider only the case $\la_{2j^*-1}>0$ since the case $\la_{2j^*-1}<0$ follows from the same argument.
  Let $\vp\in L^2(\M)$ satisfy $P^+\vp\notin H^{1/2}(\M)$.
  We prove Theorem \ref{main_theorem} by contradiction.
  We assume that there exists $u \in C([-\de,0];L^2(\M))$ satisfying \eqref{2mLSE}--\eqref{initial} on $[-\de,0]$.
  Then, we have $P^+u\in C((-\de,0];H^{1/2}(\M))$ by Proposition \ref{prop_smoothing}.
  However, it contradicts to $P^+\vp\notin H^{1/2}(\M)$.
  This proof works even if we replace $P^+$ and $[-\de,0]$ with $P^-$ and $[0,\de]$, respectively.
  Similarly, we can show that for any $\de>0$ there exist no solution $u(t,x)$ of \eqref{2mLSE}--\eqref{initial} with $u(0,x)=\vp(x)\in L^2(\M)\setminus C^\I(\M)$ on $[-\de,\de]$ satisfying $u\in C([-\de,\de];L^2(\M))$.
\end{proof}


\if0
\section*{Appendix}
\begin{lem}
Let $\{p_j\}_{j=1}^\infty, \{q_j\}_{j=1}^\infty \subset \C$ be given and
$\{x_j\}_{j=1}^\infty$ be defined by the following recurrence formula
\EQ{\label{rec1}
x_{j}=q_{j}+\sum_{l=1}^{j-1} p_{j-l}x_l.
}
Then, it follows that
\EQ{\label{rec2}
x_j=\sum_{k=1}^{j} p^*_{j-k}q_k
}
where $p^*_0=1$ and
\EQ{\label{rec3}
p^*_m=\sum_{1 \le n \le m} \sum_{j_1+j_2+\cdots+j_n=m} \prod_{k=1}^n p_{j_k}
}
for $m\in \N$.
\end{lem}
For instance, we have
\EQQ{
&p^*_1=p_1, \ p^*_2=p_2+p_1^2, \ p^*_3=p_3+2p_2p_1+p_1^3, \ p^*_4=p_4+2p_3p_1+p_2^2+3p_2p_1^2+p_1^4,\\
&p^*_5=p_5+2p_4p_1+2p_3p_2+3p_3p_1^2+3p_2^2p_1+4p_2p_1^3+p_1^5}
and
\EQQ{
&x_1=q_1, \ x_2=p_1q_1+q_2, \ x_3=(p_2+p_1^2)q_1+p_1q_2+q_3,\\
&x_4=(p_3+2p_2p_1+p_1^3)q_1+(p_2+p_1^2)q_2+p_1q_3+q_4,\\
&x_5=(p_4+2p_3p_1+p_2^2+3p_2p_1^3+p_1^4)q_1+(p_3+2p_2p_1+p_1^3)q_2+(p_2+p_1^2)q_3+p_1q_4+q_5.
}
\begin{proof}
Note that
\EQ{\label{rec4}
p_j^*=\sum_{l=0}^{j-1} p_{j-l}p^*_{l}
}
for $j\in \N$.
We prove \eqref{rec2} by induction.
Obviously, \eqref{rec2} holds for $j=1$.
Here, we assume \eqref{rec2} holds for $j=1,2,\ldots,i-1$.
Then, by \eqref{rec1}, we have
\EQQ{
x_i=q_i+\sum_{l=1}^{i-1}p_{i-l}x_l=q_i+\sum_{l=1}^{i-1}p_{i-l}\sum_{k=1}^l p^*_{l-k}q_k.
}
Changing the order of summation, we have
\EQQ{
x_i=q_i+\sum_{k=1}^{i-1}\Big(\sum_{l=k}^{i-1} p_{i-l}p^*_{l-k}\Big)q_k.
}
By \eqref{rec4}, we have $\sum_{l=k}^{i-1} p_{i-l}p^*_{l-k} =p^*_{i-k}$.
Therefore, \eqref{rec2} holds for $j=i$.
\end{proof}

\fi

\section*{Acknowledgements}
The first author was supported by RIKEN Junior Research Associate Program and JSPS KAKENHI Grant Number JP20J12750.
The second author was supported by JSPS KAKENHI Grant Number 17K05316.

\end{document}